\newtheorem{lemma}{Lemma}[subsection]
\Crefname{lemma}{Lemma}{Lemmas}
\newtheorem{prop}[lemma]{Proposition}
\Crefname{prop}{Proposition}{Propositions}
\newtheorem{thm}[lemma]{Theorem}
\Crefname{thm}{Theorem}{Theorems}
\newtheorem*{thm*}{Theorem}
\newtheorem{coro}[lemma]{Corollary}
\Crefname{coro}{Corollary}{Corollary}
\theoremstyle{definition}
\newtheorem{dfn}[lemma]{Definition}
\Crefname{dfn}{Definition}{Definitions}
\newtheorem*{dfn*}{Definition}
\newtheorem{rk}[lemma]{Remark}
\Crefname{rk}{Remark}{Remarks}
\Crefname{fact}{Fact}{Fact}
\newtheorem*{fact*}{Fact}
\Crefname{eg}{Example}{Examples}
\newtheorem*{q}{Question}
\Crefname{q}{Question}{Question}
\newcommand{\Z}{\mathbb{Z}}
\newcommand{\R}{\mathbb{R}}
\newcommand{\mi}{\setminus}
\newcommand{\1}{^{-1}}
\title{Scattering manifolds and symplectic fillings}
\author{Davide Alboresi}
\date{}
\begin{document}

\maketitle

\begin{abstract}
Scattering symplectic manifolds are (closed) manifolds with a mildly degenerate Poisson structure. In particular they can be viewed as symplectic structures on a Lie algebroid which is almost everywhere isomorphic to the tangent bundle. In this paper we prove that all scattering symplectic manifolds arise as glueings of weak symplectic fillings of contact manifolds, and all pairs of weak symplectic fillings with matching boundaries can be glued to a scattering symplectic manifold.
\end{abstract}

\tableofcontents

\section{Introduction}

A scattering symplectic structure on a manifold $X$ is a Poisson bivector $\pi$ which is almost everywhere symplectic, and whose rank drops on a codimension-$1$ submanifold $Z$, in a prescribed way. The prescription can be described in terms of the \textit{scattering Lie algebroid}, which is a vector bundle $^\text{sc}TX$ such that $\Gamma(^\text{sc}TX)=\langle x^2\partial_x,\,x\partial_{y_1},\,\dots,\,x\partial_{y_k} \rangle$, where $y_1,\dots,y_k$ is a coordinate system on $Z$, and locally $Z=\{x=0\}$. The bundle $^\text{sc}TX$ comes with a map $\rho: {^\text{sc}TX}\longrightarrow TX$, induced from the inclusion on sections, which is an isomorphism on $X\mi Z$. We say that $\pi$ is scattering symplectic if there exists a symplectic form $\omega:{^\text{sc}TX}\longrightarrow {^\text{sc}T^*X}$ such that the diagram
\begin{equation}\label{diagram}
\begin{tikzcd}
 ^\text{sc}T^*X \arrow{r}{\omega^{-1}} \arrow{d}[swap]{\rho} & ^\text{sc}TX \arrow{d}{\rho}\\
 T^*X \arrow{r}{\pi} & TX 
\end{tikzcd}
\end{equation}
commutes. Scattering symplectic structures were introduced in \cite{melinda}, the idea being to view $\omega$ (or rather its inverse $\omega\1$) as a desingularization of $\pi$, and that much of the Poisson geometry of $\pi$ is actually encoded in the symplectic geometry of $\omega$. Notably, one is able to prove normal form theorems using Moser-trick type of arguments, and to compute Poisson cohomology. Scattering symplectic structures are examples of Poisson structures that can be ``desingularized by a Lie algebroid", i.e. for which there exists a Lie algebroid with a symplectic structure that can be fit in a diagram like (\ref{diagram}) (see also \cite{ralphthesis}). Log-symplectic structures (\cite{guilleminmirandapires}) and elliptic symplectic structures (\cite{gilmarco}) are other examples of this.\\
A scattering symplectic structure on $(X,\,Z)$ induces a canonical cooriented contact structure on $Z$. As in the log-symplectic case, then, $X$ is orientable if and only if $Z$ is coorientable in $X$. Moreover, there is a normal form theorem around the singularity: every scattering symplectic structure looks like
\begin{equation}\label{normalform}
    \omega=\frac{d\lambda}{\lambda^3}\wedge (-\alpha+x^2\beta_1)+\frac{1}{2x^2}d\alpha+\beta_2
\end{equation}
Here $\lambda$ is a distance function from the zero section of the normal bundle $NZ$, $\alpha$ is a contact form on $Z$, and $\beta_1$ and $\beta_2$ are closed differential forms on $Z$. The cohomology classes of the forms $\beta_i$ are invariants of the Poisson structure.\\ 
When $\beta_1=\beta_2=0$, and $X$ is oriented, this is the normal form of a strong symplectic filling of $(Z,\,\ker\alpha)$; conversely given a filling $W$ of $(Z,\,\ker\alpha)$ one can glue two copies of $W$ with opposite orientations and get a scattering symplectic manifold with singularity on $Z$, and $\beta_1=\beta_2=0$. This can be phrased as follows:
\begin{fact*}[\cite{melinda}]
$(Z,\,\xi=\ker\alpha)$ is the singular locus of an oriented scattering symplectic manifold with $\beta_1=\beta_2=0$ if and only if it is strongly fillable.
\end{fact*}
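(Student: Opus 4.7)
The plan is to read both directions off the explicit normal form (\ref{normalform}), which when $\beta_1=\beta_2=0$ reduces to
\begin{equation*}
\omega \;=\; -\frac{dx}{x^3}\wedge\alpha + \frac{1}{2x^2}\,d\alpha \;=\; d\!\left(\frac{\alpha}{2x^2}\right),
\end{equation*}
where $x$ is a signed defining function for $Z$ in a tubular neighborhood; this is available because the orientability of $X$ forces $Z$ to be coorientable.

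For the forward direction, assume $(X,\omega)$ is an oriented scattering symplectic manifold with singular locus $Z$ and vanishing invariants. I would set $W := \{x\geq\varepsilon\}\subset X$ for small $\varepsilon>0$; this is compact with boundary $\partial W\cong Z$, and since $\rho$ is an isomorphism away from $Z$ the scattering form $\omega$ restricts to a genuine symplectic form on $W$. A short computation in the normal form identifies $V := -\tfrac{x}{2}\,\partial_x$ as a Liouville vector field (one obtains $\iota_V\omega = \alpha/(2x^2)$); $V$ is outward-pointing along $\partial W$, and the induced contact form $\iota_V\omega|_{\partial W} = \alpha/(2\varepsilon^2)$ has kernel the prescribed contact structure $\xi=\ker\alpha$. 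Hence $(W,\omega|_W)$ is a strong symplectic filling of $(Z,\xi)$.

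For the converse, given a strong filling $(W,\omega_W)$ of $(Z,\xi=\ker\alpha)$, I would attach a positive end of the symplectization to form the completion $\tilde W := W\cup_Z([0,\infty)\times Z)$ equipped with $\omega = d(e^t\alpha)$ along the cylinder. The change of variable $x = (2e^t)^{-1/2}$ converts this end to $(0,1/\sqrt{2}\,]\times Z$ carrying $d(\alpha/(2x^2))$, and extending to $x=0$ adjoins a new copy of $Z$ as a boundary. Performing the identical construction on a mirror copy of $W$ parametrized by $x\in[-1/\sqrt{2},0)$ and gluing the two pieces along $\{x=0\}$ produces a closed $X$ whose form near $Z$ coincides with the $\beta_1=\beta_2=0$ model; since $dx/x^3$ and $1/x^2$ are invariant under $x\mapsto -x$, this expression extends smoothly across $Z$ as a nondegenerate section of $\wedge^2({}^{\text{sc}}T^*X)$. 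Neither implication is deep once the normal form is in hand; the principal item requiring care in the converse is the compatibility of orientations in the gluing, which is automatic because $x\mapsto -x$ is an orientation-reversing automorphism of the local model $((-\delta,\delta)\times Z,\,d(\alpha/(2x^2)))$, so that gluing $\tilde W$ to its orientation-reversed twin reproduces the scattering model exactly and yields an oriented $X$.
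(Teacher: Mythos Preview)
Your argument is correct and follows essentially the same route the paper sketches: the introduction notes that with $\beta_1=\beta_2=0$ the normal form is that of a strong filling, and conversely one glues two copies of a strong filling along $Z$; the later corollary to \Cref{normalform2} makes the forward direction explicit via the substitution $s=1/(2\lambda^2)$, which is exactly your change of variable (your Liouville field $V=-\tfrac{x}{2}\partial_x$ is just $s\partial_s$ in those coordinates). The only cosmetic point is that your set $\{x\geq\varepsilon\}$ tacitly means the closure of one component of $X\setminus Z$ with a small collar removed, since the defining function $x$ is only defined near $Z$; this is harmless.
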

Motivated by the above result, we give the following definition:
\begin{dfn*}\label{scfill}
A cooriented contact manifold $(Z,\,\xi)$ has an \textit{(orientable) scattering-filling} if it is the singular locus of a closed (orientable) scattering symplectic manifold.
\end{dfn*}
The question that we want to address here is:
\begin{q}
How does scattering-fillability compare to the other notions of fillability existing in the contact topology literature?
\end{q}
The main result of this paper is the following.
\begin{thm*}
The cooriented contact manifold $(Z,\,\ker\alpha)$ has an orientable scattering-filling if and only if it is weakly fillable, in the sense of \cite{weakfillings}.
\end{thm*}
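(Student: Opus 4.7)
The easy direction---that a scattering-filling yields a weak filling---follows directly from the normal form~\eqref{normalform}. Given an oriented scattering symplectic manifold $(X,Z,\omega)$, work in a tubular neighborhood of $Z$ where $\omega$ has the form~\eqref{normalform} and cut along $\{|x|=\epsilon\}$ for $\epsilon>0$ small. On each side $W_\epsilon^\pm=\{\pm x\ge\epsilon\}$ the anchor $\rho$ is an isomorphism, so $\omega$ is a genuine smooth symplectic form there. Its pullback to the boundary is
\[
\omega|_{\{x=\epsilon\}}\;=\;\frac{1}{2\epsilon^2}\,d\alpha+\beta_2,
\]
which for $\epsilon$ small is positive on $\xi=\ker\alpha$ since $d\alpha|_\xi$ is a positive symplectic form and $\beta_2|_\xi$ is bounded. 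Identifying $\{x=\epsilon\}$ with $(Z,\ker\alpha)$, $W_\epsilon^\pm$ is a weak symplectic filling in the sense of~\cite{weakfillings}.

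The key observation for the converse is that the scattering normal form with $\beta_1=0$ and $\beta_2=\Omega|_Z$ can be rewritten on $\{x\ne 0\}$ as $\omega=d(\alpha/2x^2)+\Omega|_Z$, which is the scattering incarnation of the symplectic completion of a weak filling $(W,\Omega)$. More precisely, on a cylinder $Z\times[0,\infty)_t$ the closed form $\Omega|_Z+d(h(t)\alpha)$ has top power
\[
\bigl(\Omega|_Z+d(h(t)\alpha)\bigr)^n\;=\;n\,h'(t)\,dt\wedge\alpha\wedge\bigl(\Omega|_Z+h(t)\,d\alpha\bigr)^{n-1},
\]
a positive volume form for any smooth increasing $h$ with $h(0)=0$, $h'(0)>0$: the contact condition $\alpha\wedge(d\alpha)^{n-1}>0$ together with the weak filling condition of~\cite{weakfillings} (phrased as $\alpha\wedge(\Omega|_Z+s\,d\alpha)^{n-1}>0$ for every $s\geq 0$) guarantees positivity. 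Under the substitution $x=(2h(t))^{-1/2}$ this form becomes exactly the scattering normal form on the end $t\to\infty$, i.e.\ $x\to 0^+$.

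The construction of the scattering filling is then to form $X=W\cup_Z\overline{W}$ by gluing two copies of $W$ along $Z$ with reversed orientation on the second copy; to deform $\Omega$ via Moser's trick in a collar of $\partial W$ to the standard form $\Omega|_Z+d(g(t)\alpha)$ with $g$ increasing, $g(0)=0$, $g'(0)>0$ (likewise on $\overline W$); to choose a smooth extension $h$ of $g$ to $[0,\infty)$; and to use the substitution $x=(2h(t))^{-1/2}$ to identify the far end of each resulting infinite cylinder with a half of a tubular neighborhood $Z\times(-\eta,\eta)$ of $Z\subset X$. On this neighborhood the form is $d(\alpha/2x^2)+\Omega|_Z$, which extends smoothly across $Z=\{x=0\}$ as a scattering section of $\wedge^2\,{}^{\text{sc}}T^*X$. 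The main technical hurdle is the initial Moser step: because the weak filling condition of~\cite{weakfillings} is a pointwise positivity condition on $\xi$ rather than a purely cohomological one, the naive linear homotopy between $\Omega$ and the model is closed but not obviously non-degenerate. One must exploit the convexity of the weak-filling positivity cone and tailor $g$ to match the derivatives of $\Omega$ at $\partial W$ so that the interpolating family stays within the class of weak fillings; orientations on the two copies of $W$ must also be tracked so that the glued scattering form has the prescribed invariants $\beta_1=0$ and $\beta_2=[\Omega|_Z]$.
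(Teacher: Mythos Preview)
Your overall architecture matches the paper's: for weak $\Rightarrow$ scattering you run Moser to put the weak filling into the model $d(s\alpha)+\omega_Z$ near the boundary, complete to an infinite cylinder, and substitute $s=\frac{1}{2x^2}$ to produce a symplectic manifold with scattering boundary, which is then doubled. The paper does exactly this. The Moser step you single out as ``the main technical hurdle'' is in fact a standard collar normal form (quoted in the paper as \Cref{generalnormalform2}): whenever $\omega|_\xi$ is symplectic one has $\omega\cong d(s\alpha)+\omega_Z$ near $\partial W$. The point is that both forms restrict to the \emph{same} form $ds\wedge\alpha+\omega_Z$ on $TW|_{\partial W}$ once the collar coordinate is chosen so that $\iota_{\partial_s}\omega|_{T\partial W}=\alpha$, and then the linear homotopy is automatically nondegenerate near $s=0$ --- no convexity-of-cones argument is needed.

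For scattering $\Rightarrow$ weak you take a more direct route than the paper: you cut at $\{x=\epsilon\}$ and read off $\omega|_{\{x=\epsilon\}}=\tfrac{1}{2\epsilon^2}d\alpha+\beta_2$, whereas the paper first deforms away $\beta_1$ on the cylindrical end and then invokes the almost-complex-structure characterization of weak fillings (\Cref{almostcomplex}). Your shortcut is valid, but as written it is incomplete in dimension $>4$: you only check that $\omega|_\xi$ is positive, i.e.\ the $\tau=0$ case of the weak filling condition $\alpha\wedge(\omega|_{TZ}+\tau\,d\alpha)^{n-1}>0$ for all $\tau\ge 0$. The full condition follows by the same domination argument --- by compactness of $Z$ there is $s_0$ with $\alpha\wedge(s'd\alpha+\beta_2)^{n-1}>0$ for all $s'>s_0$, and choosing $\epsilon$ with $\tfrac{1}{2\epsilon^2}>s_0$ gives $\alpha\wedge\bigl((\tfrac{1}{2\epsilon^2}+\tau)d\alpha+\beta_2\bigr)^{n-1}>0$ for every $\tau\ge 0$ --- but you should say so explicitly, since (as you yourself note later) in higher dimensions positivity of $\omega|_\xi$ alone is strictly weaker than weak fillability.
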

Also non-orientable scattering manifolds can be described in terms of symplectic fillings.
\begin{thm*}
Assume that $(Z,\,\ker\alpha)$ is the singular locus of a non-orientable scattering manifold. Then there exists a connected double cover $p:\tilde{Z}\longrightarrow Z$ such that $(\tilde{Z},\,p^*\alpha)$ is weakly fillable, in the sense of \cite{weakfillings}.
\end{thm*}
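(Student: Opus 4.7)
The plan is to reduce to the main theorem (orientable case) by passing to the orientation double cover. Let $q:\tilde{X}\longrightarrow X$ denote the orientation double cover; since $X$ is non-orientable, $\tilde{X}$ is connected and orientable. The scattering Lie algebroid and its symplectic form pull back under any étale map: in coordinates $(x,y_1,\dots,y_k)$ adapted to $Z$, the local generators $x^2\partial_x,\,x\partial_{y_i}$ of $^\text{sc}TX$ lift to the analogous generators of $^\text{sc}T\tilde{X}$, and $q^*\omega$ remains closed and nondegenerate. Hence $\tilde{X}$ inherits a canonical scattering symplectic structure with singular locus $\tilde{Z}:=q\1(Z)$, and the normal form (\ref{normalform}) is preserved by the pullback.

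The key step is to verify that $p:=q|_{\tilde{Z}}:\tilde{Z}\longrightarrow Z$ is a \emph{connected} double cover, after restricting to a single connected component of $Z$ if necessary. The complement $X\mi Z$ carries a genuine symplectic structure, since the anchor $\rho$ is an isomorphism there, and is therefore canonically oriented; consequently the restriction of $q$ to $X\mi Z$ is the trivial double cover, and all non-trivial monodromy of $q$ must be concentrated along $Z$. More precisely, on a tubular neighborhood $U$ of $Z$ the cover $q|_U$ agrees with the pullback to $U$ of the coorientation double cover of $Z$ in $X$. Invoking the equivalence ``$X$ is orientable if and only if $Z$ is coorientable in $X$'' from the introduction, the non-orientability of $X$ forces $Z$ to be non-coorientable in $X$, so this double cover of $Z$ is non-trivial and hence connected.

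Finally, the normal form (\ref{normalform}) shows directly that the cooriented contact structure on $\tilde{Z}$ induced by the pulled-back scattering symplectic structure on $\tilde{X}$ is precisely $\ker(p^*\alpha)$: the coefficient of $d\lambda/\lambda^3$ near $Z$ pulls back to the analogous coefficient near $\tilde{Z}$. Thus $\tilde{X}$ is an orientable scattering symplectic manifold with singular locus $(\tilde{Z},\,\ker(p^*\alpha))$, and applying the main theorem to $\tilde{X}$ produces a weak symplectic filling of $(\tilde{Z},\,p^*\alpha)$, as required. The main technical obstacle is the connectedness assertion of the second paragraph: one must carefully trace how the non-orientability of $X$ is manifested along $Z$, which amounts to matching the orientation double cover of $X$ in a neighborhood of $Z$ with the coorientation double cover of $Z$ in $X$; once this identification is in place, the remainder of the argument is formal.
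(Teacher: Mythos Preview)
Your argument is correct but takes a genuinely different route from the paper. The paper constructs the \emph{real oriented blow-up} of $X$ along $Z$: it removes the tubular neighbourhood $(-\varepsilon,\varepsilon)\times_{\Z_2}\tilde{Z}$ and glues back $(-\varepsilon,0]\times\tilde{Z}$, obtaining directly a symplectic manifold with scattering boundary $\tilde{Z}$ whose interior is symplectomorphic to a \emph{single} copy of $X\setminus Z$; it then invokes \Cref{boundaryscattering} and the orientable case. You instead pass to the orientation double cover $q:\tilde{X}\to X$, a \emph{closed} orientable scattering symplectic manifold whose regular part consists of \emph{two} copies of $X\setminus Z$, and apply the orientable case directly. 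The two constructions are related: once your $\tilde{X}$ is split along $\tilde{Z}$ into $X^+\cup X^-$ as in \Cref{boundaryscattering}, either half is precisely the paper's blow-up. The paper's approach is more economical (no doubling-then-halving), while yours makes the identification of $\tilde{Z}$ with the coorientation double cover of $Z$, and the role of the equivalence ``$X$ orientable $\Leftrightarrow$ $Z$ coorientable'', more explicit. Both arguments tacitly assume $Z$ connected when asserting that $\tilde{Z}$ is connected; your parenthetical about restricting to a component is the right way to handle the general case.
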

\paragraph{Organization of the paper.}
In section \ref{1} we introduce scattering symplectic manifolds and their local forms. In section \ref{2} we recall some notions of symplectic fillings of contact manifolds, and in section \ref{3} we prove the main theorems. In section \ref{4} we deduce some simple results on the symplectic topology of scattering symplectic manifolds. 

\paragraph{Acknowledgements.}
The author is thankful to Melinda Lanius for useful conversations. This research was supported by the VIDI grant 639.032.221 from NWO, the Netherlands Organisation for Scientific Research.

\section{Scattering symplectic manifolds}\label{1}

\subsection{The scattering tangent bundle}

Let $(X,\,Z)$ be a pair consisting of a manifold $X$ with a codimension-$1$ closed submanifold $Z$. Let $^{\text{sc}}\mathfrak{X}_Z$ be the sheaf of vector fields vanishing on $Z$, quadratically in the transverse direction, linearly in the tangent direction. Explicitely, for each point $p\in Z$ choose a coordinate neighbourhood $U$, with coordinates $(x,\,y_1,\,\dots,y_N)$, such that $Z\cap U=\{x=0\}$, and $(y_1,\,\dots,y_N)$ is a coordinate system for $Z$ on $Z\cap U$. We define $^{\text{sc}}\mathfrak{X}_Z(U):=\langle x^2\partial_x,\,x\partial_{y_1},\,\dots,\,x\partial_{y_N}\rangle$. This defines a locally free sheaf $^{\text{sc}}\mathfrak{X}_Z$ of rank $\text{dim}X$.
\begin{dfn}\label{defscattering}
The \textit{scattering tangent bundle} $^{\text{sc}}TX={^{\text{sc}}TX}_Z$ is the unique vector bundle such that $\Gamma(^{\text{sc}}TX)=$ $^{\text{sc}}\mathfrak{X}_Z$. Its dual is denoted with $^{\text{sc}}T^*X$ and called the \textit{scattering cotangent bundle}
\end{dfn}
Since $^{\text{sc}}\mathfrak{X}_Z$ is closed under the Lie bracket, $^{\text{sc}}TX$ is a Lie algebroid, with anchor map induced by the inclusion $^{\text{sc}}\mathfrak{X}\hookrightarrow \mathfrak{X}$. Moreover, the anchor induces an isomorphism between ${^{\text{sc}}TX_Z}|_{X\mi Z}$ and $TX|_{X\mi Z}$.

\subsection{Scattering symplectic structures}

We collect here the definition and some known results on scattering symplectic manifolds. All the results mentioned here are due to \cite{melinda}.

\begin{dfn}
A \textit{scattering symplectic structure} is a symplectic structure on $^{\text{sc}}TX$. That is, it is a closed non-degenerate section of $\Lambda^2(^{\text{sc}}T^*X)$. Here ``closed" means closed in the Lie algebroid sense, or equivalently as an ordinary differential form on $X\setminus Z$.
\end{dfn}
We will denote a scattering symplectic manifold with a triple $(X,\,Z,\,\omega)$. A scattering symplectic structure induces an ordinary symplectic structure on $X\mi Z$. In particular $\text{dim}X=2n$, and $X\mi Z$ is oriented. 
\begin{rk}
In \cite{melinda}, the attention is restricted to pairs of oriented manifolds $(X,\,Z)$. However, none of the results that we will mention depends on the orientability of $X$.
\end{rk}
The following propositions are proven in \cite{melinda}, in the orientable case. The same proofs given in \cite{melinda} hold in the non-orientable case.
\begin{prop}
A scattering symplectic structure $\omega$ on $(X,\,Z)$ induces a canonical cooriented contact structure $\xi$ on $Z$. In particular, $Z$ is oriented.
\end{prop}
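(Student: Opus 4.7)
The strategy is to first extract an intrinsic $1$-form (up to conformal rescaling) on $Z$ out of the scattering symplectic form $\omega$, verify it is contact, and then check that the resulting hyperplane distribution is independent of the choices made.

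\textbf{Step 1 (local form and definition of $\xi$).} Fix a point $p \in Z$ and choose a tubular neighborhood $U \cong Z \cap U \times (-\epsilon,\epsilon)$ together with a defining function $x$ for $Z$. From the classification of scattering $2$-forms modulo coboundaries, any closed non-degenerate section of $\Lambda^2(^{\text{sc}}T^*X)$ can be written near $Z$ in the normal form (\ref{normalform}), namely
\[
\omega=\frac{dx}{x^3}\wedge(-\alpha+x^{2}\beta_{1})+\frac{1}{2x^{2}}d\alpha+\beta_{2},
\]
for some $\alpha\in\Omega^1(Z)$ and closed $\beta_i \in \Omega^{2-i}(Z)$. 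I would tentatively define $\xi:=\ker\alpha\subset TZ$.

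\textbf{Step 2 ($\alpha$ is contact).} Expanding $\omega^n$ in the above normal form, only the cross term between $\tfrac{dx}{x^3}\wedge(-\alpha)$ and $(\tfrac{1}{2x^{2}}d\alpha)^{n-1}$ contributes to the leading singular term, yielding
\[
\omega^{n}=-\frac{n}{2^{n-1}}\,\frac{dx}{x^{2n+1}}\wedge\alpha\wedge(d\alpha)^{n-1}+(\text{lower order in }x^{-1}).
\]
Non-degeneracy of $\omega$ as a section of $\Lambda^{2n}(^{\text{sc}}T^*X)$ is equivalent to $\omega^n$ having non-vanishing leading coefficient when expressed in the scattering volume frame $\tfrac{dx}{x^{2n+1}}\wedge dy_1\wedge\cdots\wedge dy_{2n-1}$. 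Hence $\alpha\wedge(d\alpha)^{n-1}|_Z$ is a nowhere vanishing top form on $Z$, i.e.\ $\alpha$ is a contact form. In particular $\xi = \ker\alpha$ is contact, $Z$ is orientable (via $\alpha\wedge(d\alpha)^{n-1}$), and the sign of $\alpha$ provides a coorientation of $\xi$.

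\textbf{Step 3 (invariance).} The main obstacle is to show that $\xi$ does not depend on the choice of defining function $x$, tubular neighborhood, or normal form representative. I would attack this invariantly by considering the restricted algebroid $^{\text{sc}}TX|_Z$. A direct change-of-coordinates computation shows that the line $L:=\langle x^2\partial_x|_Z\rangle$ is a canonical rank-$1$ subbundle of $^{\text{sc}}TX|_Z$ (a new defining function $\tilde x=xh$ with $h(0,y)\ne 0$ sends $\tilde x^2\partial_{\tilde x}|_Z$ to $h(0,y)\cdot x^2\partial_x|_Z$), and that the quotient $^{\text{sc}}TX|_Z/L$ is canonically isomorphic to $TZ$ twisted by a line bundle. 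Symplectic reduction of $\omega|_Z$ along $L$ (using that $L\subset L^{\perp\omega}$ since $L$ is a line) produces a canonical symplectic form on $L^{\perp\omega}/L$. Pulling back through the identification with $TZ$, one obtains a well-defined conformal class of $1$-forms on $Z$ whose kernel is $\xi$; in the normal form frame this conformal class contains $\alpha$. This shows that alternative choices replace $\alpha$ by $h(0,y)\cdot\alpha+\text{(exact)}$, and so leave $\ker\alpha\subset TZ$ and its coorientation unchanged. Finally, the non-orientable case introduces no new difficulty: all arguments are local around $Z$, and neither the normal form nor the invariance argument uses orientability of $X$.
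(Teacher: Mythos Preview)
Your Step~1 is circular. The normal form~(\ref{normalform}) is the content of \Cref{generalnormalform}, which is stated \emph{after} the proposition you are proving and already builds in that $\alpha$ is a contact form on $Z$; you cannot invoke it to define the candidate $\alpha$. The paper avoids this by using only the crude decomposition coming from a choice of fiber metric: with $\lambda$ the associated distance function one writes
\[
\omega=\alpha_\lambda\wedge\frac{d\lambda}{\lambda^3}+\frac{\beta_\lambda}{\lambda^2},
\]
where $\alpha_\lambda,\beta_\lambda$ are smooth forms with no a~priori structure, and sets $\alpha:=\alpha_\lambda|_{TZ}$. Your Step~2 computation (leading singular term of $\omega^n$ forces $\alpha\wedge(d\alpha)^{n-1}\neq 0$) is exactly the right argument, and it applies verbatim to this cruder decomposition; that is what the paper means by ``it follows from closure and nondegeneracy of $\omega$ that $\alpha$ is a contact form.'' So the fix for Step~1 is simply to replace the appeal to the full normal form by this elementary splitting.

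Your Step~3 has two problems. First, the assertion that alternative choices replace $\alpha$ by $h(0,y)\cdot\alpha+\text{(exact)}$ cannot be right as stated: adding an exact $1$-form to $\alpha$ changes $\ker\alpha$, so this would not establish invariance of $\xi$. Second, the symplectic-reduction sketch does not produce a $1$-form on $Z$: reduction along the isotropic line $L$ yields a symplectic form on the rank-$(2n-2)$ bundle $L^{\perp\omega}/L$, and the passage from there to ``a conformal class of $1$-forms on $Z$'' is not explained (at best you get the hyperplane field as the image of $L^{\perp\omega}/L$ inside $^{\text{sc}}TX|_Z/L$, but the coorientation and the line-bundle twist need to be handled). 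The paper's invariance argument is more direct: two fiber metrics give distance functions related by a \emph{positive} factor, so the corresponding $\alpha$'s differ by a positive function on $Z$, which preserves both $\ker\alpha$ and its coorientation.
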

A contact form can be recovered as follows: pick a fiber metric on the normal bundle, and denote with $\lambda$ the distance from $Z$ function. Write 
\[
\omega=\alpha_\lambda\wedge\frac{d\lambda}{\lambda^3}+\frac{\beta_\lambda}{\lambda^2}
\]
It follows from closure and nondegeneracy of $\omega$ that $\alpha={\alpha_\lambda}|_{TZ}$ is a contact form. The form $\alpha$ does not depend on the choice of the metric up to a positive factor. Hence there is a well-defined induced cooriented contact structure. Moreover, a scattering symplectomorphism preserves the contact form up to a positive scaling factor, hence scattering symplectomorphisms preserve the cooriented contact structure on the singular locus (i.e. induce orientation preserving contactomorphisms on the singular locus).

\subsection{A normal form theorem around the singular locus}

In order to state the normal form theorem around the singular locus, we need to spend a word on tubular neighbourhoods of hypersurfaces. Identify a neighbourhood of $Z$ with an open neighbourhood of the zero section in the normal bundle $NZ$. Let $\lambda$ be a distance function from $Z$, induced by a fiber metric on $NZ$. Define $\tilde{Z}:=\{\lambda=1 \}$. Fiberwise multiplication by $-1$ induces a diffeomorphism $\sigma:\tilde{Z}\longrightarrow \tilde{Z}$. This induces a free $\Z_2$ action, and $\tilde{Z}/\Z_2=Z$. Moreover, one has $NZ\cong \R\times_{\Z_2}\tilde{Z}$, where $\R\times_{\Z_2}\tilde{Z}:=(\R\times \tilde{Z})/\Z_2$, with action $(-1)\cdot (t,\,x)=(-t,\,\sigma(x))$.\\
When $NZ$ is trivial, then $\tilde{Z}=Z\sqcup Z$, and one gets back $NZ\cong \R\times Z$. Moreover, one can always assume that a tubular neighbourhood with smooth boundary is of the form $(-\varepsilon,\,\varepsilon)\times_{Z_2}\tilde{Z}$.

\begin{thm}\label{generalnormalform}
Let $(X,\,Z,\,\omega)$ be scattering symplectic. Take a fiber metric on the normal bundle of $Z$, denote the distance function from $0$ with $\lambda$. Let $\tilde{Z}=\{\lambda=1\}$, and denote the projection with $p:\tilde{Z}\longrightarrow Z$. There exist
\begin{itemize}
\item a contact form $\alpha$ on $Z$
\item a closed $1$-form $\beta_1$ on $Z$ 
\item a closed $2$-form $\beta_2$ on $Z$
\end{itemize}
such that, in a tubular neighbourhood of the form $(-\varepsilon,\,\varepsilon)\times_{Z_2}\tilde{Z}$,
\begin{equation}\label{equationgeneralnormalform}
\omega\cong \frac{d\lambda}{\lambda^3}\wedge (-p^*\alpha+\lambda^2p^*\beta_1)+\frac{1}{2\lambda^2}dp^*\alpha+p^*\beta_2
\end{equation}
Here the expression $\omega\cong\eta$ means: there exists a diffeomorphism $\phi$ such that $\phi|_Z=id$, and such that $\phi^*\omega=\eta$ on a tubular neighbourhood of $Z$.
\end{thm}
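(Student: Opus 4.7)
The proof will be a Moser-type argument adapted to the scattering Lie algebroid. The plan is: (i) read off the invariant data $(\alpha,\beta_1,\beta_2)$ from $\omega$ itself, (ii) construct a local model $\omega_0$ of the form appearing on the right hand side of (\ref{equationgeneralnormalform}), and (iii) use Moser's trick in the scattering setting to produce a diffeomorphism $\phi$ with $\phi^*\omega=\omega_0$ and $\phi|_Z=\mathrm{id}$.

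For step (i), in the tubular neighborhood $(-\varepsilon,\varepsilon)\times_{\Z_2}\tilde{Z}$ I would use the splitting provided by the fiber metric to write $\omega=\alpha_\lambda\wedge\frac{d\lambda}{\lambda^3}+\frac{\beta_\lambda}{\lambda^2}$ as in the proposition above. Taylor-expanding $\alpha_\lambda$ and $\beta_\lambda$ in $\lambda$ (equivariantly on $\tilde{Z}$) and examining $d\omega=0$ order by order as a scattering form yields algebraic constraints: they force the zeroth-order term of $\beta_\lambda$ to equal $\tfrac{1}{2}d\alpha_0$ (accounting for the summand $\frac{1}{2\lambda^2}dp^*\alpha$) and the first-order term of $\alpha_\lambda$ to vanish, while allowing one to read off $\beta_1$, $\beta_2$ from the remaining coefficients as closed forms on $Z$, and $-\alpha_0|_{TZ}$ as a contact form on $Z$. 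With these data, define $\omega_0$ by the right-hand side of (\ref{equationgeneralnormalform}). By construction $\omega_0$ is closed and scattering-nondegenerate in a neighborhood of $Z$, and $\omega$ and $\omega_0$ agree along $Z$ to the order required for the Moser step.

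For step (iii), consider the linear interpolation $\omega_t=(1-t)\omega_0+t\omega$ for $t\in[0,1]$. Shrinking the tubular neighborhood if necessary, each $\omega_t$ is scattering symplectic, since $\omega_t=\omega_0$ along $Z$ where $\omega_0$ is nondegenerate. One then seeks a scattering $1$-form $\eta$ with $d\eta=\omega-\omega_0$ and solves $\iota_{V_t}\omega_t=-\eta$ for a time-dependent section $V_t$ of ${^{\text{sc}}TX}$. Because the anchor of any section of ${^{\text{sc}}TX}$ vanishes on $Z$, the flow $\phi_t$ of $V_t$ fixes $Z$ pointwise, and $\phi:=\phi_1$ is the desired diffeomorphism. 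The main obstacle is producing the scattering primitive $\eta$ in a tubular neighborhood of $Z$: this requires a relative Poincaré-type statement for the scattering de Rham complex, together with the observation that the cohomological data built out of $\beta_1,\beta_2$ for $\omega$ and for $\omega_0$ coincide by construction, so that $\omega-\omega_0$ is exact as a scattering $2$-form. Once $\eta$ is chosen to vanish with sufficient order on $Z$, the vector field $V_t$ is a genuine section of ${^{\text{sc}}TX}$ and its flow integrates over $[0,1]$ on a possibly smaller neighborhood of $Z$, concluding the proof.
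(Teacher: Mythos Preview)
The paper does not actually prove this theorem: it is stated in the section of background material with the disclaimer ``All the results mentioned here are due to \cite{melinda}'', and no proof is given. So there is no in-paper argument to compare against; the reference is Lanius's original paper.

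That said, your outline is exactly the strategy used in \cite{melinda}: extract the invariants $(\alpha,[\beta_1],[\beta_2])$ from the leading jets of $\omega$ along $Z$, build the model $\omega_0$, and run a Moser argument in the scattering category. The substantive technical ingredient you flag --- a relative Poincar\'e lemma for the scattering de~Rham complex, ensuring that $\omega-\omega_0$ admits a scattering primitive once the cohomological invariants match --- is precisely what carries the weight in the original proof, and is established there via a computation of the Lie algebroid cohomology of $^{\text{sc}}TX$ near $Z$. Your sketch of step~(i) is a bit loose (e.g.\ the vanishing of the linear-in-$\lambda$ term of $\alpha_\lambda$ comes from the $\Z_2$-equivariance of the tubular model together with closedness, not from closedness alone, and the sign in ``$\beta_0=\tfrac12 d\alpha_0$'' is off), but these are cosmetic; the architecture is correct and matches the cited source.
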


\section{Symplectic fillings of contact manifolds}\label{2}

In this section we recall several different notions of symplectic filling in contact geometry. In general, a symplectic filling of a cooriented contact manifold $(Z,\,\xi)$ is a connected symplectic manifold $(W,\,\omega)$ with boundary $\partial W=Z$ as oriented manifolds (i.e., the boundary orientation matches the contact orientation) such that the symplectic form is ``compatible" with the contact distribution.\\
Different ways of making the word ``compatible" precise give different flavours of symplectic fillings. We will always consider notions of compatibility that only involve a neighbourhood of the boundary. Let us start with the strongest notion appearing in this paper.
\begin{dfn}
$(W,\,\omega)$ is a \textit{strong symplectic filling} of $(Z,\,\xi)$ if there is a collar neighbourhood $U=(-\varepsilon,\,0]\times Z$ of $\partial W$, with $\partial W=\{0\}\times Z$, and transverse coordinate $s$, such that $\omega=d(s\alpha)$ on $U$.
\end{dfn}
In particular $\omega$ pulls-back to $\xi$ on $Z$ as a symplectic form, belonging to the canonical conformal symplectic structure of $\xi$.\\
Denote the conformal symplectic structure on $\xi$, viewed as a set of symplectic forms, with $CS_\xi$. 
\begin{dfn}[\cite{weakfillings}]
$(W,\,\omega)$ is a \textit{weak symplectic filling} of $(Z,\,\xi)$ if $\omega|_\xi$ is symplectic, and for all representatives $\eta$ of $CS_\xi$, the form $\omega|_{\xi}+\eta$ is symplectic on $\xi$.
\end{dfn}
Of course strong implies weak. In dimension $4$ the contact distribution is $2$-dimensional, and this definition of weak filling is equivalent to the requirement that $\omega|_{\xi}$ is a positive form, i.e. it induces the same orientation as the contact orientation of $\xi$. This is equivalent, still in dimension $4$, to $\omega|_{\xi}$ belonging to $CS_\xi$. In higher dimension, only requiring $\omega|_{\xi}\in CS_\xi$ turns out to be equivalent to the existence of a strong filling (\cite{McDuff1991}). Instead, in all dimensions (including $4$) the notions of strong and weak fillability are different (\cite{weakfillings}). The notion of weak fillability can be understood in terms of almost complex structures.
\begin{thm}[\cite{weakfillings}]\label{almostcomplex}
A symplectic manifold $(W,\,\omega)$ with boundary $\partial W= Z$ is a weak filling if and only if there exists an almost complex structure $J$ on $W$ such that
\begin{itemize}
\item $J$ tames $\omega$
\item $\xi=TZ\cap JTZ$
\item for any contact form $\alpha$ and $v\in \xi$, $d\alpha(v,\,Jv)>0$ (with respect to the boundary orientation)
\end{itemize}
\end{thm}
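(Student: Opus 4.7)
The plan is to prove the two directions separately, with the essential content in constructing $J$ from a weak filling.

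For the \emph{if} direction, assume $J$ satisfies the three listed properties. I would first observe that $\xi$ is $J$-invariant: if $v\in\xi=TZ\cap JTZ$, then $v=Jw$ for some $w\in TZ$, so $Jv=-w\in TZ$; and $v\in TZ$ gives $Jv\in JTZ$, hence $Jv\in\xi$. Since $J$ tames $\omega$ and $\xi$ is a $J$-invariant subspace, $\omega|_\xi$ is nondegenerate and thus symplectic on $\xi$. The third property says $d\alpha|_\xi$ is tamed by $J|_\xi$, and every representative $\eta\in CS_\xi$ is of the form $f\,d\alpha|_\xi$ with $f>0$ (since rescaling $\alpha\mapsto f\alpha$ changes $d\alpha|_\xi$ by the factor $f$, using that $\alpha|_\xi=0$), so $\eta$ is also tamed by $J|_\xi$. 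The sum $\omega|_\xi+\eta$ of two forms tamed by the same $J$ is tamed, hence nondegenerate on $\xi$, which is exactly the weak filling condition.

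For the \emph{only if} direction, I would fix a contact form $\alpha$ for $\xi$ and build $J$ first on $TW|_Z$, then extend to all of $W$. At each $p\in Z$, I would split $T_pW=\xi_p\oplus R_p\oplus L_p$, with $R_p$ the Reeb line of $\alpha$ and $L_p$ an outward-pointing line. The construction is to pick an almost complex structure $J_\xi$ on $\xi_p$ that is tamed simultaneously by $\omega|_{\xi_p}$ and by $d\alpha|_{\xi_p}$, and to define $J$ on the complementary plane by rotating $R_p$ into $L_p$ so that $R_p\oplus L_p$ is $J$-invariant, $\omega$-tamed, and $JR_p$ is transverse to $T_pZ$. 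By construction one reads off $\xi_p=T_pZ\cap J_p T_pZ$ and $d\alpha(v,J_pv)>0$ on $\xi_p$; $\omega$-tameness of the full $J$ follows from tameness on each summand provided the splitting is arranged to be $\omega$-orthogonal, which one can ensure by rescaling $L_p$. Having built $J$ on $TW|_Z$, I would extend to all of $W$ using that the bundle of $\omega$-tame almost complex structures over $W$ has nonempty contractible fibers, so that any section over the closed subset $Z$ extends to a global section.

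The main technical obstacle is the pointwise existence of $J_\xi$ on $\xi_p$ tamed by \emph{both} $\omega|_{\xi_p}$ and $d\alpha|_{\xi_p}$ simultaneously. The weak filling hypothesis gives that the whole ray $\omega|_\xi+t\,d\alpha|_\xi$ for $t\in[0,\infty)$ consists of symplectic forms on $\xi_p$. A convexity/contractibility argument in the space of tame almost complex structures produces the desired $J_\xi$: for $t$ large, any $J$ compatible with $d\alpha|_{\xi_p}$ tames $\omega|_\xi+t\,d\alpha|_\xi$, and deforming along the ray back to $t=0$ one obtains a common tame structure. All other steps—$J$-invariance in the $(\Leftarrow)$ direction, the extension to $W$, and the decomposition on $R_p\oplus L_p$—are essentially formal.
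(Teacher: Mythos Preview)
The paper does not prove this statement; it is quoted from \cite{weakfillings} (Massot--Niederkr\"uger--Wendl), so there is no argument here to compare against and you are in effect reconstructing the original proof. Your $(\Leftarrow)$ direction is correct as written.

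In the $(\Rightarrow)$ direction the architecture is right but there are two genuine gaps. First, the splitting $T_pW=\xi_p\oplus R_p\oplus L_p$ cannot be made $\omega$-orthogonal by ``rescaling $L_p$'': rescaling a line does nothing to orthogonality, and since the Reeb line $R_p$ has no reason to lie in the symplectic complement $\xi_p^{\omega}$, no choice of the transverse line $L_p$ will help. The repair is to work instead with the $\omega$-orthogonal decomposition $T_pW=\xi_p\oplus\xi_p^{\omega}$; the line $\xi_p^{\omega}\cap T_pZ$ then plays the role of your $R_p$ and the rest of your construction goes through. Second, your sketch for obtaining a $J_\xi$ tamed simultaneously by $\omega|_\xi$ and $d\alpha|_\xi$ does not actually produce one: that a $d\alpha$-compatible $J$ tames $\omega|_\xi+t\,d\alpha|_\xi$ for large $t$ says nothing at $t=0$, and ``deforming along the ray'' changes $J$, after which tameness for $d\alpha|_\xi$ is no longer guaranteed. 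The linear-algebra fact you need is that two symplectic forms on a vector space admit a common tame almost complex structure precisely when every positive linear combination of them is nondegenerate; the weak-filling hypothesis supplies exactly this input, but the nontrivial implication requires an honest argument (for instance via the spectrum of $\omega_0^{-1}\omega_1$) that your sketch does not provide.
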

A possibly ``intermediate`` notion, motivated by dynamics and the theory of holomorphic curves is that of a \textit{stable filling}, where the boundary is required to inherit a \textit{stable Hamiltonian structure}, in a strong sense (meaning, in a way that induces a normal form in a collar neighbourhod of the boundary). Let us recall first what a stable Hamiltonian structure is.
\begin{dfn}
A \textit{stable Hamiltonian structure} on a $(2n-1)$-dimensional manifold $Z$ is a pair consisting of a one-form and a two-form $(a,\,b)$ such that
\begin{itemize}
\item $db=0$
\item $a\wedge b^{n-1}\neq 0$
\item $\ker b\subset\ker da$
\end{itemize}
\end{dfn}
A consequence of the definition is that, for $\varepsilon$ small enough, the form $\Omega=d(ta)+b$ is symplectic on $(-\varepsilon,\,\varepsilon)\times Z$. This is the setup in which one is able to study moduli spaces of (punctured) holomorphic curves, in the framework of symplectic field theory (\cite{introsft}). A stable filling is what one expects.
\begin{dfn}
$(W,\,\omega)$ is a \textit{stable (Hamiltonian) filling} if there exists an outward pointing vector field $X$ aroud the boundary such that $(\iota_X\omega)|_{T\partial W},\,\omega|_{T\partial W})$ is a stable Hamiltonian structure.
\end{dfn}

\section{Scattering fillings versus weak fillings}\label{3}

\subsection{Statement of the result}

Scattering symplectic geometry suggests yet another notion of filling of a contact manifold.
\begin{dfn}
A \textit{(orientable) scattering filling} of a cooriented contact manifold $(Z,\,\ker\alpha)$ is a (orientable) scattering symplectic manifold $(X,\,Z,\,\omega)$, such that $Z$ with the induced contact structure is contactomorphic to $(Z,\,\ker\alpha)$. A scattering filling is \textit{strong} if the forms $\beta_1$, $\beta_2$ are exact.
\end{dfn}
In \cite{melinda} it is shown that 
\[
\text{strong orientable scattering fillability }\Leftrightarrow\text{ strong fillability}
\]
In this section we prove the main results of the present paper.
\begin{thm}\label{mainthm}
\begin{itemize}
\item $(Z,\,\ker\alpha)$ has an orientable scattering filling $\Leftrightarrow$ $(Z,\,\ker\alpha)$ is weakly fillable.
\item $(Z,\,\ker\alpha)$ has a non-orientable scattering filling $\Leftrightarrow$ there is a connected double cover $p:\tilde{Z}\longrightarrow Z$ such that $(\tilde{Z},\,p^*\ker\alpha)$ is weakly fillable.
\end{itemize}
\end{thm}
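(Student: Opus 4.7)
The overall plan is to treat the orientable case directly using \Cref{generalnormalform}, and to deduce the non-orientable case by lifting to the orientation double cover. The orientable $\Rightarrow$ direction is a cutting argument, and the orientable $\Leftarrow$ direction is a gluing argument that rests on a coordinate change between the scattering normal form and a standard weak-filling collar model.

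For the orientable $\Rightarrow$ direction, let $(X, Z, \omega)$ be an orientable scattering filling. I apply \Cref{generalnormalform} to place $\omega$ in a tubular neighborhood $(-\varepsilon,\varepsilon) \times Z$ of $Z$ in the standard form
\[
\omega = \frac{d\lambda}{\lambda^3}\wedge(-\alpha+\lambda^2\beta_1) + \frac{1}{2\lambda^2}d\alpha + \beta_2,
\]
and truncate at $\{\lambda=\lambda_0\}$ for $\lambda_0>0$ small, obtaining a compact symplectic manifold $W_{\lambda_0}$ with boundary $Z$. Setting $d\lambda = 0$ gives $\omega|_Z = \tfrac{1}{2\lambda_0^2}\, d\alpha+\beta_2$, so on $\xi=\ker\alpha$ the restriction $\omega|_\xi$ is a large positive multiple of $d\alpha|_\xi$ plus the bounded perturbation $\beta_2|_\xi$. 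Since every representative of $CS_\xi$ is a positive function times $d\alpha|_\xi$, for $\lambda_0$ sufficiently small $\omega|_\xi+\eta$ is symplectic on $\xi$ for every $\eta\in CS_\xi$, which is the weak filling condition. The agreement between the boundary orientation of $W_{\lambda_0}$ and the contact orientation $\alpha\wedge(d\alpha)^{n-1}$ follows from a direct check on the sign of the leading term of $\omega^n$.

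For the orientable $\Leftarrow$ direction, let $(W,\omega_W)$ be a weak filling of $(Z,\ker\alpha)$. The key input is the standard collar-extension for weak fillings (see \cite{weakfillings}): possibly after attaching a symplectic collar one can assume
\[
\omega_W = d(s\alpha) + \sigma
\]
on a collar $(-\delta,0]\times Z$ of $\partial W$, for some closed $2$-form $\sigma$ on $Z$. The substitution $u=\tfrac{1}{2\lambda^2}$ then identifies $d(u\alpha)+\sigma$ on $u\in[u_0,\infty)$ with the orientable scattering normal form with $\beta_1=0$ and $\beta_2=\sigma$ on $\lambda\in(0,\lambda_0]$, the singular locus appearing at $\lambda=0$. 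Attaching such a scattering end to $W$ and gluing two oppositely oriented copies across $Z$ yields a closed orientable scattering symplectic manifold whose singular locus is $(Z,\ker\alpha)$.

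For the non-orientable case, the direction $\Rightarrow$ is immediate: pulling a non-orientable scattering filling back along the orientation double cover $\tilde{X}\to X$ produces an orientable scattering filling of the connected double cover $\tilde{Z}\to Z$ with contact form $p^*\alpha$, so the orientable direction gives a weak filling of $(\tilde{Z},p^*\ker\alpha)$. For $\Leftarrow$, starting from a weak filling $\tilde{W}$ of $(\tilde{Z},p^*\ker\alpha)$ I run the orientable construction on two copies of $\tilde{W}$ to obtain an orientable scattering manifold $\tilde{X}$ with singular locus $\tilde{Z}$, and then take the quotient by the free involution that, on the scattering neck, acts as $(t,z)\mapsto(-t,\sigma(z))$ with $\sigma$ the deck transformation of $p$; invariance of the scattering form under this involution is a direct normal-form check, so the quotient $X=\tilde{X}/\Z_2$ is the desired non-orientable scattering filling of $(Z,\ker\alpha)$. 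The main technical obstacle throughout is the collar-extension result for weak fillings, which is precisely where the ``weak'' (rather than ``strong'') hypothesis enters: it produces the nontrivial closed $2$-form $\sigma$ that is absorbed as $\beta_2$ in the scattering normal form, and without it one would only recover the strong-filling case from \cite{melinda}.
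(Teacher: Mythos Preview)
Your orientable $\Leftarrow$ direction is essentially the paper's argument (\Cref{generalnormalform2} and \Cref{weakscat}): collar model $d(s\alpha)+\sigma$, the substitution $s=\tfrac{1}{2\lambda^2}$, and then doubling. Nothing to add there.

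Your orientable $\Rightarrow$ direction is \emph{different} from the paper's, and more elementary. The paper first passes to the coordinate $s=\tfrac{1}{2\lambda^2}$ (\Cref{normalform2}), then deforms the term $-\tfrac{1}{2}\tfrac{ds}{s}\wedge\beta_1$ away by a cutoff (\Cref{deform1}), and finally verifies the weak filling condition via the almost complex criterion \Cref{almostcomplex}. You instead cut at $\lambda=\lambda_0$ and check the defining inequality of a weak filling directly: $\omega|_\xi=\tfrac{1}{2\lambda_0^2}\,d\alpha|_\xi+\beta_2|_\xi$, so for any $\eta=f\,d\alpha|_\xi$ with $f>0$ one has $\omega|_\xi+\eta=(\tfrac{1}{2\lambda_0^2}+f)\,d\alpha|_\xi+\beta_2|_\xi$, which is symplectic for $\lambda_0$ small since $(g\,d\alpha+\beta_2)^{n-1}=g^{n-1}(d\alpha+g^{-1}\beta_2)^{n-1}$ and $g\ge \tfrac{1}{2\lambda_0^2}$ is large. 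This bypasses both the $\beta_1$--deformation and the appeal to \Cref{almostcomplex}; what the paper's longer route buys is the intermediate corollaries about killing $[\beta_1]$ and about the collar model $d(s\alpha)+\beta$, which are of independent interest but not needed for the bare implication.

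For the non-orientable $\Rightarrow$ direction you pull back along the orientation double cover of $X$, whereas the paper takes the real oriented blow-up along $Z$ (\Cref{orientableisenough}). These agree: since $w_1(X)|_Z=w_1(NZ)$, the restriction of the orientation cover to $Z$ is exactly the sphere bundle $\tilde Z\to Z$, and the pulled-back form is scattering with singular locus $\tilde Z$. Either construction feeds into the orientable case.

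There is, however, a genuine gap in your non-orientable $\Leftarrow$ argument. You build $\tilde X$ as a double of $\tilde W$ and propose to quotient by a free involution that on the neck is $(t,z)\mapsto(-t,\sigma(z))$. But this involution must extend over the two copies of $\tilde W$ as a scattering symplectomorphism, and nothing guarantees that: the deck transformation $\sigma:\tilde Z\to\tilde Z$ need not extend to $\tilde W$, and the form $\beta_2=\omega_{\tilde Z}$ produced by your collar model is not $\sigma$-invariant in general, so the neck form itself is not invariant. (Equivalently, to close up a single scattering-boundary copy of $\tilde W$ into a non-orientable manifold with singular locus $Z$ one needs the normal form data on $\tilde Z$ to be pulled back from $Z$.) The paper does not actually supply this direction either: \Cref{orientableisenough} only yields $\Rightarrow$, and indeed the introduction states the non-orientable result as a one-way implication. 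So your argument here would need an additional step---for instance, an equivariant version of the collar normal form for weak fillings---that neither you nor the paper provides.
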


\subsection{Manifolds with scattering boundary}

When $X$ is a manifold with boundary, one can define the scattering tangent bundle $^\text{sc}TX$ exactly as in \Cref{defscattering}, with $Z=\partial X$. A scattering symplectic form on $X$ is defined accordingly as a symplectic form on $^\text{sc}TX$. When we mention \textit{symplectic manifolds with scattering boundary} we will always mean we are in the situation just described.\\
On a closed orientable scattering symplectic manifold $(X,\,Z,\,\omega)$, one can fix an orientation, and compare it with the symplectic orientation induced by $\omega$ on $X\mi Z$. These orientations are going to coincide on some components of $X\mi Z$, and differ on other components. This induces a splitting $X=X^+\cup X^-$, where $X^\pm$ are compact submanifolds with boundary $\partial X^\pm=Z$. Both $X^+$ and $X^-$ are symplectic manifolds with scattering boundary.\\
Conversely, any two symplectic manifolds with (connected) scattering boundary, and matching normal form around the boundary, can be glued along the boundary to form a (closed) scattering symplectic manifold. Moreover, given a symplectic manifold with scattering boundary $W$, it's easy to show that its double inherits a scattering symplectic structure with singular locus $\partial W$.\\
Hence, as far as the contact manifold is concerned, being the boundary of a symplectic manifold with scattering boundary or being the singular locus of a scattering symplectic manifold are equivalent notions. Let us formulate the result just explained in the following proposition.
\begin{prop}\label{boundaryscattering}
Let $(Z,\,\ker\alpha)$ be a cooriented contact manifold. Then it is oriented scattering fillable if and only if it can be realized as the boundary of a symplectic manifold with scattering boundary.
\end{prop}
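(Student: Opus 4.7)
The plan is to make rigorous the two constructions outlined in the paragraph immediately preceding the proposition, both of which rest on the explicit normal form \Cref{generalnormalform} near the singular locus.

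For the forward direction, suppose $(X, Z, \omega)$ is a closed orientable scattering filling. Fix an orientation on $X$ and compare it with the symplectic orientation induced by $\omega$ on $X\setminus Z$. Expanding the top power of $\omega$ in the collar model $(-\varepsilon,\varepsilon)\times_{\Z_2} \tilde Z$ via \eqref{equationgeneralnormalform} gives
\[
\omega^n \;=\; -\frac{n}{2^{n-1}}\cdot\frac{d\lambda\wedge p^*\bigl(\alpha\wedge (d\alpha)^{n-1}\bigr)}{\lambda^{2n+1}} \;+\; (\text{terms of smaller pole order in }\lambda),
\]
so the leading factor $\lambda^{-(2n+1)}$ switches sign across $Z$. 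Consequently the fixed orientation on $X$ agrees with the symplectic orientation on precisely one local side of every sheet of $Z$, yielding a global decomposition $X = X^+\cup X^-$ into compact codimension-zero submanifolds with common boundary $Z$. Each piece inherits a scattering symplectic form by restriction (since $^{\text{sc}}TX^\pm = {^{\text{sc}}TX}|_{X^\pm}$ and restriction preserves closedness and nondegeneracy), and the component on which the induced boundary orientation matches the contact orientation exhibits $(Z,\ker\alpha)$ as the scattering boundary of a symplectic manifold.

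For the converse, let $(W, \omega_W)$ be a symplectic manifold with scattering boundary $Z$ inducing the contact structure $\ker\alpha$. The plan is to form the double $X = W\cup_Z W'$ of two copies of $W$ glued along $Z$ and extend $\omega_W$ to a scattering symplectic form on $X$. Placing $\omega_W$ in the normal form \eqref{equationgeneralnormalform} on a collar of $Z$ in $W$, the expression is manifestly invariant under the involution $\lambda\mapsto -\lambda$: the factors $d\lambda/\lambda^3$ and $1/\lambda^2$ are invariant (both numerator sign flips cancel the denominator sign flip in the first, and the second is even), while $p^*\alpha$ and $p^*\beta_i$ are $\Z_2$-equivariant by construction. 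Therefore the form extends canonically across the gluing locus to a closed scattering symplectic structure on $X$ with singular locus $Z$, inducing the same contact form $\alpha$. Orientability of $X$ is automatic, as each open half is symplectic (hence oriented) and the $\lambda\mapsto -\lambda$ gluing is orientation-compatible.

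The only substantive input is the $\Z_2$-symmetry of the normal form under $\lambda\mapsto -\lambda$, which is immediate by inspection of \eqref{equationgeneralnormalform}; the rest is routine bookkeeping with the collar model. No obstacle is anticipated beyond keeping track of orientation conventions when identifying which of $X^\pm$ has the correct boundary orientation.
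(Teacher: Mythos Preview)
Your proposal is correct and follows exactly the approach the paper sketches in the paragraph preceding the proposition: split an orientable scattering manifold as $X^{+}\cup X^{-}$ by comparing a fixed orientation with the symplectic one, and conversely double a manifold with scattering boundary using the $\lambda\mapsto -\lambda$ symmetry of the normal form. You supply more detail than the paper (the leading-order computation of $\omega^{n}$ and the explicit invariance check), but the strategy is identical; the only cosmetic point is that $\lambda$ is defined as a distance function, so your sign-flip argument is really about the signed normal coordinate $t$ with $\lambda=|t|$, and your final orientability remark should be read as ``the double of an orientable manifold with boundary is orientable'' rather than that the symplectic orientations themselves glue.
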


\subsection{Reduction to the orientable case}

In this section we relate scattering fillability by a non-orientable manifold to orientable fillability of a double cover. The result is the following.
\begin{prop}\label{orientableisenough}
Let $(Z,\,\ker\alpha)$ be the singular locus of a non-orientable scattering symplectic manifold. Then there exists a double cover $p:\tilde{Z}\longrightarrow Z$, and symplectic manifold with scattering boundary that coincides with $(\tilde{Z},\,\ker p^*\alpha)$.
\end{prop}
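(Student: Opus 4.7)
The plan is to reduce to the orientable case by passing to the orientation double cover of $X$, and then read off the desired double cover of $Z$ as the restriction of this cover. First, let $\pi \colon \hat X \to X$ be the orientation double cover; since $X$ is assumed non-orientable (and connected), $\hat X$ is connected and orientable. Because $\pi$ is a local diffeomorphism, $\pi^*\omega$ is a scattering symplectic structure on $\hat X$ with singular locus $\hat Z := \pi^{-1}(Z)$, and by functoriality of the normal form \eqref{equationgeneralnormalform} the induced contact structure on $\hat Z$ is $\ker p^*\alpha$, where $p := \pi|_{\hat Z} \colon \hat Z \to Z$.

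Next I would identify $\hat Z \to Z$ concretely. A point of $\pi^{-1}(Z)$ is a point $z \in Z$ together with an orientation of $T_zX$. Using the short exact sequence $0 \to T_zZ \to T_zX \to N_zZ \to 0$ and the fact that the contact orientation gives a preferred orientation of $TZ$, such a datum is equivalent to an orientation of the normal line $N_zZ$. Hence $\hat Z \to Z$ is the double cover associated to the normal line bundle $NZ$. From the introductory remark that orientability of $X$ is equivalent to coorientability of $Z$, the assumed non-orientability of $X$ forces $NZ$ to be non-trivial, so $\hat Z$ is connected.

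Finally, since $\hat X$ is now a closed orientable scattering symplectic manifold with singular locus $\hat Z$, the splitting discussion preceding \Cref{boundaryscattering} produces a decomposition $\hat X = \hat X^+ \cup \hat X^-$ with $\partial \hat X^\pm = \hat Z$ and each $\hat X^\pm$ a symplectic manifold with scattering boundary $(\hat Z, \ker p^*\alpha)$. Setting $\tilde Z := \hat Z$ and taking either $\hat X^+$ or $\hat X^-$ finishes the argument.

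I expect the main obstacle to be the identification in the second step: showing carefully that the orientation double cover of $X$, restricted to $Z$, is precisely the coorientation double cover of $Z$ in $X$, and hence that its connectedness matches the non-orientability hypothesis. A secondary subtlety is to check that $\hat Z$ genuinely separates $\hat X$, which ultimately follows from the observation that $\omega^n$ changes sign as one crosses the singular locus, as can be read off from the leading term of \eqref{equationgeneralnormalform}.
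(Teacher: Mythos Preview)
Your argument is correct, but it proceeds differently from the paper. The paper constructs the desired manifold directly as the \emph{real oriented blow-up} of $X$ along $Z$: one removes the tubular neighbourhood $(-\varepsilon,\varepsilon)\times_{\Z_2}\tilde Z$ and glues back $(-\varepsilon,0]\times\tilde Z$, observing that the normal form \eqref{equationgeneralnormalform} extends smoothly to this half-open cylinder and that the new boundary is exactly the unit normal bundle $\tilde Z\to Z$. You instead pass to the orientation double cover $\hat X$, pull back $\omega$, and then invoke the already-established splitting $\hat X=\hat X^+\cup\hat X^-$ for orientable scattering manifolds. The two constructions produce the same object: since $X\setminus Z$ is symplectic and hence orientable, the orientation cover trivialises over the complement, so $\hat X\setminus\hat Z\cong (X\setminus Z)\sqcup(X\setminus Z)$ and each $\hat X^\pm$ is exactly the compactification of one copy---i.e.\ the real oriented blow-up. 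Your route has the advantage of recycling the orientable splitting lemma rather than redoing a local glueing, and it makes the identification $\tilde Z=\{\text{orientations of }NZ\}$ transparent; the paper's route is more self-contained and avoids the detour through a double cover of the ambient manifold. The two subtleties you flag (that $\hat Z\to Z$ is the coorientation cover, and that $\hat Z$ genuinely separates $\hat X$) are real but minor, and your sketches of how to handle them are accurate.
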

\begin{proof}
This proposition is proven by constructing the real oriented blow-up along the singular locus $Z$.
\begin{lemma}
Given a non-orientable scattering symplectic manifold $(X,\,Z,\,\omega)$, there exists a symplectic manifold with scattering boundary $(\tilde{X},\,\tilde{Z},\,\tilde{\omega})$, with a map $b:(\tilde{X},\,\tilde{Z})\longrightarrow (X,\,Z)$ which restricts to a symplectomorphism $(\tilde{X}\mi \tilde{Z},\,\tilde{\omega})\cong (X\mi Z,\,\omega)$ on the regular part, and to a double cover on the singular part. This manifold coincides with the real oriented blow-up of $(X,\,Z)$ along $Z$.
\end{lemma}
\begin{proof}[Proof of the lemma]
It is immediate to see that $((-\varepsilon,\,\varepsilon)\times_{\Z_2}\tilde{Z})\mi (\{0\}\times_{\Z_2}\tilde{Z})=((-\varepsilon,\,\varepsilon)\times_{\Z_2}\tilde{Z})\mi (\{0\}\times Z)\cong (-\varepsilon,\,0)\times \tilde{Z}$. Moreover there is a smooth map $b:(-\varepsilon,\,0]\times \tilde{Z}\longrightarrow (-\varepsilon,\,\varepsilon)\times_{\Z_2}\tilde{Z}$, which on $\{0\}\times \tilde{Z}$ is the projection $p:\tilde{Z}\longrightarrow \tilde{Z}/\Z_2=Z$. The normal form \Cref{generalnormalform} implies that the manifold with boundary obtained by removing $(-\varepsilon,\,\varepsilon)\times_{\Z_2}\tilde{Z}$ and gluing $(-\varepsilon,\,0]\times \tilde{Z}$ back in has a natural scattering symplectic structure.
\end{proof}
\end{proof}
\Cref{orientableisenough} together with \Cref{boundaryscattering} imply that the first item in \Cref{mainthm} implies the second. We will prove the first item in \Cref{mainthm} in the remaining subsections. In what follows we will always assume orientability of $X$, unless otherwise specified.

\subsection{An alternate normal form theorem}

The following alternate normal form result is going to be useful in the proof of the main theorem. It is stated for orientable scattering symplectic manifolds for simplicity, as that is the only case that we will need. Nonetheless, it holds, with the obvious modifications, for arbitrary scattering symplectic manifolds (compare with \Cref{generalnormalform}).
\begin{prop}\label{normalform2}
Let $(X,\,Z,\,\omega)$ be an orientable scattering symplectic manifold. There exists $\varepsilon>0$, and a symplectic manifold $Y$ with boundary $\partial Y = Z\sqcup Z$, such that the symplectic form extends to a symplectic form $\widehat{\omega}$ on $\widehat{Y}=Y\sqcup [\frac{1}{2\varepsilon^2},\,+\infty)\times Z$. The restriction of $\widehat{\omega}$ to 
$[\frac{1}{2\varepsilon^2},\,+\infty)\times Z$ is 
\begin{equation}\label{normalform2eq}
    d(s\alpha)-\frac{1}{2}\frac{ds}{s}\wedge\beta_1+\beta_2
\end{equation}
and $(\widehat{Y},\,\widehat{\omega})$ is symplectomorphic to $X\mi Z$.
\end{prop}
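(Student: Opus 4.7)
My plan is to reduce Proposition \ref{normalform2} to a change-of-coordinates computation applied to Theorem \ref{generalnormalform}. Since $X$ is orientable, $Z$ is coorientable and $NZ$ is trivial, so the tubular neighborhood appearing in Theorem \ref{generalnormalform} simplifies to $(-\varepsilon,\varepsilon)\times Z$ with coordinate $\lambda$, and the projection $p$ becomes the identity. Thus
\[
\omega \;=\; \frac{d\lambda}{\lambda^3}\wedge(-\alpha+\lambda^2\beta_1)+\frac{1}{2\lambda^2}d\alpha+\beta_2
\]
on this neighborhood, where $\alpha,\beta_1,\beta_2$ are pulled back from $Z$.

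The core computation is to perform the substitution $s = 1/(2\lambda^2)$ on the region $\lambda>0$. A direct check gives $ds = -\lambda^{-3}d\lambda$, hence $\tfrac{d\lambda}{\lambda^3}\wedge(-\alpha) = ds\wedge\alpha$; moreover $\tfrac{1}{\lambda^2}=2s$, so $\tfrac{1}{2\lambda^2}d\alpha = s\,d\alpha$; and $\tfrac{d\lambda}{\lambda}=-\tfrac{1}{2}\tfrac{ds}{s}$, so $\tfrac{d\lambda}{\lambda^3}\wedge\lambda^2\beta_1=-\tfrac{1}{2}\tfrac{ds}{s}\wedge\beta_1$. Adding these and using $d(s\alpha)=ds\wedge\alpha + s\,d\alpha$ gives exactly \eqref{normalform2eq} on $(1/(2\varepsilon^2),+\infty)\times Z$. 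The same substitution applied on the region $\lambda<0$ gives the analogous normal form on a second cylindrical end.

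To assemble the proposition, I would fix $0<\delta<\varepsilon$ and set
\[
Y \;:=\; X\setminus\{\,|\lambda|<\delta\,\},
\]
a compact symplectic manifold with $\partial Y = \{\lambda=-\delta\}\sqcup\{\lambda=\delta\}\cong Z\sqcup Z$. Its complement in $X\setminus Z$ consists of two punctured collars $\{0<\lambda<\delta\}$ and $\{-\delta<\lambda<0\}$, each of which is identified via $s=1/(2\lambda^2)$ with $[1/(2\delta^2),+\infty)\times Z$, and on each of them $\omega$ takes the form \eqref{normalform2eq}. Gluing these two cylindrical ends back to the two boundary components of $Y$ produces $\widehat Y$, which by construction is symplectomorphic to $X\setminus Z$. (I read the statement's shorthand $Y\sqcup[1/(2\varepsilon^2),+\infty)\times Z$ as the obvious gluing along both components of $\partial Y$, with $\varepsilon$ replaced by $\delta$.)

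There is no real obstacle here: the proof is essentially a change-of-variables calculation plus the observation that in the coordinate $s$ a punctured collar becomes a half-infinite cylinder. The only mild point is bookkeeping for the two sides of $Z$, which give rise to the two cylindrical ends and the two boundary components of $Y$.
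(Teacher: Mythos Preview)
Your proposal is correct and follows essentially the same approach as the paper: both reduce the result to the change of coordinates $s=\tfrac{1}{2\lambda^2}$ applied to the normal form of Theorem~\ref{generalnormalform}, and your term-by-term verification reproduces exactly the paper's computation. You are in fact more explicit than the paper in assembling $Y$ and the two cylindrical ends, and your parenthetical reading of the statement's shorthand is the intended one.
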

\begin{proof}
This just follows from \Cref{generalnormalform}. Consider the change of coordinates $s=\frac{1}{2\lambda^2}$. Since $\log(s)=-2\log(x)+\log(2)$, $\frac{dx}{x}=d\log(x)=-\frac{1}{2}d\log(s)$, then the form $\omega$ becomes of the desired form
\begin{equation}
\omega=d(s\alpha)-\frac{1}{2}d\log(s)\wedge\beta_1+\beta_2=d(s\alpha)-\frac{1}{2}\frac{ds}{s}\wedge\beta_1+\beta_2
\end{equation}
\end{proof}
In particular if $\beta_1=\beta_2=0$ that's exactly the normal form at the boundary of a strong filling of $(Z,\,\ker\alpha)$. To be more precise:
\begin{coro}
Write a connected component $\widehat{Y}$ of $X\setminus Z$ as in \Cref{normalform2}; write a neighbourhood of infinity as $[C,\,+\infty)\times Z$ . If $\beta_1=\beta_2=0$, for all $C'>C$, the manifold $W:=\widehat{Y}\cap\{s\leq C'\}$ is a strong filling of $(Z,\,\ker\alpha)$.
\end{coro}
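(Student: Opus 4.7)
The plan is to unpack \Cref{normalform2} in the special case $\beta_1 = \beta_2 = 0$ and verify the definition of strong filling directly. By that proposition, the symplectic form on the neighbourhood-of-infinity end $[C, +\infty) \times Z$ of $\widehat{Y}$ is simply $\omega = d(s\alpha)$. Truncating at $s = C'$ produces a compact manifold $W$ with boundary $\{C'\} \times Z$, and the form $\omega = d(s\alpha)$ holds on a one-sided collar $(C' - \delta, C'] \times Z$.

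My first step is to exhibit the Liouville data witnessing a strong filling. The radial vector field $Y := s \partial_s$ satisfies $\iota_Y \omega = s\alpha$, so $\lambda := s\alpha$ is a Liouville primitive for $\omega$; since $s > 0$ on the collar and $\partial_s$ points away from the interior of $W$, $Y$ is outward-pointing along the boundary. Its contraction restricted to $\{s = C'\} \times Z$ is $C'\alpha$, which is a positive contact form for $\ker\alpha$, with coorientation agreeing with the one induced on the singular locus by the scattering form.

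The second step is a cosmetic reparametrization: to match the specific collar normalization $(-\varepsilon,0] \times Z$ appearing in the paper's definition of strong filling, one flows backward along $Y$ from the boundary and relabels the time coordinate, which converts $s\alpha$ into the required Liouville primitive on a collar with boundary at $0$. The final bookkeeping item is orientation: the symplectic orientation on $W$ is represented near the boundary by $\omega^n = n\, s^{n-1}\, ds \wedge \alpha \wedge (d\alpha)^{n-1}$, so the induced boundary orientation is the one defined by $\alpha \wedge (d\alpha)^{n-1}$, which coincides with the contact orientation.

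There is no real obstacle here: the corollary is essentially the statement that the de Rham classes of $\beta_1$ and $\beta_2$ are the only obstructions to upgrading the scattering form near the singular locus to a genuine Liouville structure, so their vanishing immediately yields a strong filling after truncation at any $C' > C$.
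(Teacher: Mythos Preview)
Your proposal is correct and follows the same line as the paper, which in fact gives no separate proof: the corollary is stated immediately after the sentence ``In particular if $\beta_1=\beta_2=0$ that's exactly the normal form at the boundary of a strong filling of $(Z,\,\ker\alpha)$,'' and is treated as an immediate consequence of \Cref{normalform2} together with the definition of strong filling. Your write-up is simply the explicit verification the paper leaves to the reader --- exhibiting the Liouville field $s\partial_s$, checking the boundary restriction and orientation --- so there is nothing substantively different between the two approaches.
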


\subsection{Scattering $\Rightarrow$ weak}

The first step to relate scattering to weak filling is to get closer to stable Hamiltonian fillings, by getting rid of the form $\beta_1$.
\begin{prop}\label{deform1}
The symplectic form $\omega$ on $[C,\,+\infty)\times Z$ in \Cref{normalform2eq} can be modified to a symplectic form $\tilde{\omega}$ such that
\begin{itemize}
\item $\omega=\tilde{\omega}$ on $[C,\,C']\times Z$
\item $\tilde{\omega}=d(s\alpha)+\beta_2$ on $[C'',\,+\infty)\times Z$
\end{itemize}
\end{prop}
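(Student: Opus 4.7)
}
The plan is a cutoff-function argument, exploiting the fact that in the coordinate $s$ the offending $\beta_1$-term is an $O(1/s)$ perturbation of the desired form $d(s\alpha)+\beta_2$. Since $d\beta_1=0$ and $ds\wedge ds=0$, the discrepancy
\[
d(s\alpha)+\beta_2-\omega \;=\; \tfrac{1}{2s}\,ds\wedge\beta_1
\]
is itself closed, and so can be switched on smoothly in $s$ without disturbing closedness. Concretely, pick a smooth cutoff $\chi:\R\to[0,1]$ with $\chi\equiv 0$ on $(-\infty,C']$ and $\chi\equiv 1$ on $[C'',+\infty)$, and set
\[
\tilde\omega \;:=\; \omega+\chi(s)\cdot\tfrac{1}{2s}\,ds\wedge\beta_1 \;=\; d(s\alpha)+\frac{\chi(s)-1}{2s}\,ds\wedge\beta_1+\beta_2.
\]
By construction $\tilde\omega=\omega$ for $s\le C'$ and $\tilde\omega=d(s\alpha)+\beta_2$ for $s\ge C''$, while $d\tilde\omega=0$ is immediate since the coefficient of $ds\wedge\beta_1$ depends only on $s$.

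The substantive step is verifying that $\tilde\omega$ remains non-degenerate throughout. Writing $\tilde\omega=ds\wedge(\alpha+h(s)\beta_1)+sd\alpha+\beta_2$ with $h(s):=(\chi(s)-1)/(2s)$ bounded by $1/(2s)$ in absolute value, and using that $Z$ has dimension $2n-1$ so that $(sd\alpha+\beta_2)^n$ pulls back to $0$ on $Z$, one computes
\[
\tilde\omega^n \;=\; n\,ds\wedge(\alpha+h(s)\beta_1)\wedge(sd\alpha+\beta_2)^{n-1}.
\]
Expanding the $(n-1)$-st power, the unique top-order term in $s$ is $s^{n-1}\alpha\wedge(d\alpha)^{n-1}$, which is a nowhere-zero volume form on $Z$ by the contact condition. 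Every other contribution — either from $h(s)\beta_1$ (which brings an extra $1/s$) or from the subleading binomial pieces $\binom{n-1}{j}s^j(d\alpha)^j\wedge\beta_2^{n-1-j}$ with $j\le n-2$ — is smaller by at least a factor of $1/s$. Compactness of $Z$ makes these correction terms uniformly bounded, so once $s$ is large enough the leading term dominates pointwise and $\tilde\omega^n$ is nowhere zero.

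Finally, we may assume without loss of generality that the constant $C$ furnished by \Cref{normalform2} is taken large enough for the above estimate to hold throughout $[C,+\infty)\times Z$, since the normal form describes a tubular neighborhood of $Z$ and may be restricted to a smaller one. With $C$ thus chosen, any $C'>C$, any $C''>C'$, and any admissible cutoff $\chi$ produce the desired $\tilde\omega$. The only real obstacle is this uniform non-degeneracy estimate, and it is clean precisely because the unwanted term is $1/s$-small relative to the dominant Liouville piece $d(s\alpha)$, so that for $s\gg 0$ the form $\omega$ is already a tiny perturbation of the strong-filling model $d(s\alpha)+\beta_2$.
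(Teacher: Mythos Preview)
Your proof is correct and follows the same cutoff-function strategy as the paper: the paper writes $\tilde\omega=d(s\alpha)-\tfrac12 f(s)\,ds\wedge\beta_1+\beta_2$ for a function $f$ with $f(s)=1/s$ on $[C,C']$ and $f\equiv 0$ past $C''$, which is exactly your form with $f(s)=(1-\chi(s))/s$. The paper merely asserts that $\tilde\omega$ stays nondegenerate for $C'$ large, whereas you spell out the top-degree computation showing the $s^{n-1}\alpha\wedge(d\alpha)^{n-1}$ term dominates; your added detail is welcome and the argument is sound.
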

\begin{proof}
Pick a smooth function $f=f(s):[C,\,+\infty)\times Z\longrightarrow \R$ such that 
\begin{itemize}
\item $f(s)=\frac{1}{s}$ on $[C,\,C']$
\item $f(s)=0$ for $s\geq C''>C'$
\item $f'(s)\leq 0$
\end{itemize}
and define
\[
\tilde{\omega}=d(s\alpha)-\frac{1}{2}f(s)ds\wedge\beta_1+\beta_2
\]
$\tilde{\omega}$ remains closed and nondegenerate, at least if one chooses $C'$ big enough.
\end{proof}
Note that $(\alpha,\,\beta_2)$ is not a stable Hamiltonian structure, as we imposed no condition on $\alpha\wedge\beta^{n-1}$.\\
Note the following consequence.
\begin{coro}
If $(Z,\,\ker\alpha)$ admits a scattering filling, then it admits a filling such that the symplectic form in a collar neighbourhood is of the form $d(s\alpha)+\beta$.
\end{coro}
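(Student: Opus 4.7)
The plan is to combine the two preceding results with a truncation: \Cref{normalform2} puts the non-compact filling into cylindrical-end form, \Cref{deform1} kills the $\beta_1$-term at infinity, and then cutting off the cylindrical end at a large value of $s$ produces a compact filling whose collar is already in the desired shape, up to a translation of the $s$-coordinate.

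In detail, I would start from a scattering filling $(X,Z,\omega)$ and pass to a single connected component of $X\setminus Z$. By \Cref{normalform2}, this component is symplectomorphic to some $\widehat{Y}=Y\sqcup[C,+\infty)\times Z$ on which, over the cylindrical end, the form reads
\[
\omega=d(s\alpha)-\tfrac{1}{2}\tfrac{ds}{s}\wedge\beta_1+\beta_2.
\]
I would then apply \Cref{deform1} to produce constants $C<C'<C''$ and a new symplectic form $\tilde\omega$ which agrees with $\omega$ on $[C,C']\times Z$ (hence extends smoothly to all of $\widehat{Y}$) and equals $d(s\alpha)+\beta_2$ on $[C'',+\infty)\times Z$.

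Next I would pick any $C'''>C''$ and define the compact symplectic manifold $W:=\widehat{Y}\cap\{s\leq C'''\}$, whose boundary is the level set $\{s=C'''\}\times Z\cong Z$. Since $s$ is an outward-pointing transverse coordinate at the boundary, a collar neighbourhood inside $W$ has the form $[C''',C'''-\varepsilon]\times Z$ and the symplectic form on it is exactly $d(s\alpha)+\beta_2$. Reparametrising by $t:=s-C'''$ so that the boundary sits at $t=0$, one computes
\[
d(s\alpha)+\beta_2=d(t\alpha)+\beta,\qquad \beta:=C'''\,d\alpha+\beta_2,
\]
and $\beta$ is visibly closed. This is the asserted normal form.

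The only point requiring care is orientations: one needs the boundary orientation of $W$ (with $s$ outward-pointing) to agree with the canonical contact orientation on $Z$ induced by $\omega$, so that $W$ genuinely fills $(Z,\ker\alpha)$ and not its reverse. This is built into the set-up of \Cref{normalform2}, which is stated for the connected component of $X\setminus Z$ lying on the positive side of $Z$; no further work should be needed. I do not expect any real obstacle here, as the argument is essentially bookkeeping on top of \Cref{normalform2} and \Cref{deform1}.
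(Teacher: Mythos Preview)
Your proposal is correct and follows essentially the same route as the paper: apply \Cref{normalform2} to pass to the cylindrical-end model, use \Cref{deform1} to kill $\beta_1$ for large $s$, and then truncate at some $c\geq C''$ to obtain the compact filling. The paper's proof is terser and does not bother with the affine reparametrisation $t=s-C'''$ (it simply takes $\beta=\beta_2$ and leaves the boundary at $s=c$ rather than at $0$), but your extra bookkeeping and orientation remark are harmless elaborations of the same argument.
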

\begin{proof}
Just realize $Z$ as $\{c\}\times Z$, with $c\geq C''$, in the normal form of \Cref{deform1}. Since we are not changing the symplectic form in a neighbourhood of $\{C\}\times Z$, it extends to the rest of the filling.
\end{proof}
One can reformulate the previous corollary as a statement about scattering symplectic manifold.
\begin{coro}
If a pair $(X,\,Z)$ supports a scattering symplectic structure with cohomology decomposition $(a,\,[\beta_1],\,[\beta_2])$, then it supports one with cohomology decomposition $(a,\,0,\,[\beta_2])$.
\end{coro}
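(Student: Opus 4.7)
The plan is to carry out the deformation of \Cref{deform1} inside the scattering manifold itself, rather than inside one of its fillings. The preceding corollary used this deformation to reshape a filling; here I would apply the same deformation on a punctured tubular neighborhood of the singular locus and then glue back across $Z$.

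First, I would invoke \Cref{normalform2} (with its non-orientable analog if needed) to identify a punctured tubular neighborhood of $Z$ in $X\setminus Z$ symplectomorphically with a disjoint union of cylindrical ends of the form $[C,\,+\infty)\times Z$, each equipped with the normal form \eqref{normalform2eq}. Next, on each end I would apply \Cref{deform1} to replace $\omega$ by a symplectic form $\tilde{\omega}$ that coincides with $\omega$ for $s\in [C,\,C']$ and equals $d(s\alpha)+\beta_2$ for $s\geq C''$. Since the modification is supported strictly inside each end, $\tilde{\omega}$ extends by $\omega$ to a symplectic form on all of $X\setminus Z$.

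Reversing the change of coordinates $s=1/(2\lambda^2)$ used in the proof of \Cref{normalform2}, the model $d(s\alpha)+\beta_2$ pulls back to $\frac{d\lambda}{\lambda^3}\wedge(-\alpha)+\frac{1}{2\lambda^2}d\alpha+\beta_2$, which is precisely \eqref{equationgeneralnormalform} with $\beta_1=0$. Hence $\tilde{\omega}$ extends smoothly across $Z$ to a scattering symplectic form on $(X,\,Z)$; because neither $\alpha$ nor $\beta_2$ was touched, the invariants $a$ and $[\beta_2]$ are preserved, and the new $\beta_1$ is identically zero.

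The main thing I expect to need care is showing that the first invariant $a$ genuinely survives the deformation. The difference $\omega-\tilde{\omega}$ is a multiple of $ds\wedge p^*\beta_1$ supported inside the collar, and the point is to check that a primitive of this difference lifts to a scattering $1$-form on $X$, so that the scattering cohomology class of $\omega$ is unchanged. Once this bookkeeping is in place, the rest follows immediately from the two propositions just quoted.
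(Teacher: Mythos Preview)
Your proposal is correct and is exactly the argument the paper intends: the corollary is presented there merely as a reformulation of the preceding one, and your write-up spells out the implicit construction---apply \Cref{deform1} on each cylindrical end of the punctured neighborhood, then undo the change of variable $s=1/(2\lambda^2)$ to recover the scattering normal form with $\beta_1=0$. Your caution about the invariant $a$ is unnecessary: the cohomology decomposition is read off directly from the normal-form data $(\alpha,\beta_1,\beta_2)$ near $Z$, and since $\tilde\omega$ has the same $\alpha$ and $\beta_2$ as $\omega$ (only $\beta_1$ has been set to zero), the $a$-slot and $[\beta_2]$ are preserved automatically without any further cohomological bookkeeping.
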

We now apply \Cref{deform1} to show that scattering fillability implies weak fillability.
\begin{coro}
Let $(X,\,Z,\,\omega)$ be an orientable scattering filling of $(Z,\,\ker\alpha)$, with $Z$ connected. Let $U$ be a tubular neighbourhood of $Z$, and $W$ be a component of $X\mi U$. Then there exists a symplectic form $\tilde{\omega}$ so that $(W,\,\tilde{\omega})$ is a weak filling of $(Z,\,\ker\alpha)$.
\end{coro}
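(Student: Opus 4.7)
My plan is to combine the alternate normal form \Cref{normalform2} with the deformation of \Cref{deform1} to produce a symplectic form on $W$ whose restriction to the contact distribution satisfies the weak filling condition once the boundary is pushed sufficiently far out. By \Cref{normalform2}, a neighbourhood of $\partial W$ inside the non-compact completion $\widehat{W}$ is symplectomorphic to $[C,\infty)\times Z$ with symplectic form $d(s\alpha)-\tfrac{1}{2}\tfrac{ds}{s}\wedge\beta_1+\beta_2$. Then \Cref{deform1} furnishes a symplectic form $\tilde\omega$ on this collar which agrees with the original near $\{s=C\}$ and equals $d(s\alpha)+\beta_2$ on $\{s\geq C''\}$. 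Shrinking the tubular neighbourhood $U$ so that $\partial W$ corresponds to $\{s=c\}$ for some $c\geq C''$, $\tilde\omega$ extends to a symplectic form on $W$ coinciding with $\omega$ outside a collar of $\partial W$.

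At the boundary $\{c\}\times Z$ the computation $d(s\alpha)=ds\wedge\alpha+s\,d\alpha$ together with $ds|_{TZ}=0$ gives $\tilde\omega|_{TZ}=c\,d\alpha+\beta_2$, and since $\alpha|_\xi=0$,
\begin{equation*}
\tilde\omega|_\xi=c\,d\alpha|_\xi+\beta_2|_\xi.
\end{equation*}
A representative of $CS_\xi$ has the form $\eta=g\,d\alpha|_\xi$ for a positive function $g\in C^\infty(Z)$, so $\tilde\omega|_\xi+\eta=(c+g)\,d\alpha|_\xi+\beta_2|_\xi$. Viewing the top exterior power on the $(2n-2)$-dimensional $\xi$ as a polynomial in $h=c+g$, the leading term $h^{n-1}(d\alpha|_\xi)^{n-1}$ is a positive multiple of the contact volume form, so at each point $p\in Z$ the polynomial has positive leading coefficient and therefore admits a finite largest real root $h^*(p)$. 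By compactness of $Z$, $h^*:=\sup_p h^*(p)<\infty$, and for any $h\geq h^*+1$ the polynomial is strictly positive everywhere on $Z$. Choosing $c\geq h^*+1$ ensures that $\tilde\omega|_\xi+\eta$ is a symplectic form on $\xi$ inducing the contact orientation for every $\eta\in CS_\xi$, and also for $\eta=0$.

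It remains to verify the boundary orientation. Near $\{c\}\times Z$,
\begin{equation*}
\tilde\omega^n=n!\,s^{n-1}\,ds\wedge\alpha\wedge(d\alpha)^{n-1}+O(s^{n-2}),
\end{equation*}
and the outward normal to $W$ at $\{c\}\times Z$ is $\partial_s$. Contracting $\partial_s$ against the symplectic volume form gives $c^{n-1}\alpha\wedge(d\alpha)^{n-1}+O(c^{n-2})$, a positive multiple of the contact volume form once $c$ is large. Hence the induced boundary orientation agrees with the contact orientation, and $(W,\tilde\omega)$ is a weak filling of $(Z,\ker\alpha)$. The argument is essentially soft; the one delicate point is the uniform choice of $c$ from which all weak-filling inequalities follow simultaneously, and this uses compactness of $Z$ together with the flexibility of \Cref{deform1} (equivalently, the freedom to shrink $U$) to push $\partial W$ arbitrarily far out along $s$.
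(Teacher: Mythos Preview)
Your argument is correct and reaches the same conclusion as the paper, but by a genuinely different route. Both proofs begin identically, invoking \Cref{normalform2} and \Cref{deform1} to arrange that the symplectic form near $\partial W$ reads $d(s\alpha)+\beta_2$ for $s$ large. From there the paper proceeds via the almost complex characterisation \Cref{almostcomplex}: it builds a cylindrical $J$ with $J\partial_s=R_\alpha$ that tames $d(s\alpha)$, notes that for $s$ large this $J$ also tames $d(s\alpha)+\beta_2$, extends $J$ over all of $\widehat{W}$ by contractibility of the space of tame almost complex structures, and then reads off weak fillability from \Cref{almostcomplex}. You instead verify the weak-filling inequalities directly: since $\tilde\omega|_\xi+\eta=(c+g)\,d\alpha|_\xi+\beta_2|_\xi$, the top power at each point is a monic polynomial in $h=c+g$ with coefficients varying smoothly over the compact $Z$, hence uniformly positive once $h$ exceeds a finite threshold; choosing $c$ above that threshold handles every $\eta\in CS_\xi$ at once. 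Your approach is more elementary in that it avoids the almost complex criterion from \cite{weakfillings} and stays entirely within the pointwise linear-algebraic definition; the paper's approach, on the other hand, packages the estimate into a single taming condition and dovetails with the holomorphic-curve perspective of Section~\ref{4}. One cosmetic slip: the combinatorial factor in $\tilde\omega^n$ is $n$ rather than $n!$ (only the $k=1$ term of the binomial expansion survives), but this does not affect the positivity conclusion.
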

\begin{proof}
By \Cref{deform1}, we can attach a cylindrical end to $W$ and get a manifold $\widehat{W}$ with a symplectic form $\tilde{\omega}$, such that $\tilde{\omega}=d(s\alpha)+\beta$ in a neighbourhood $V$ of infinity. Define an almost complex structure $J$ on $TV$ such that
\begin{itemize}
\item $J$ turns $\ker\alpha$ into a complex vector bundle, and $J\partial_s=R_\alpha$, the Reeb vector field of $\alpha$.
\item $d\alpha$ tames $J$ on $\ker\alpha$
\end{itemize}
In particular $J$ tames $d(s\alpha)$ on $V$. If $s$ is big enough, then $J$ also tames $\tilde{\omega}$. This means that there exists a smaller neighbourhood $V'\subset V$ (of the form $(c,\,+\infty)\times Z$) on which $J$ tames $\omega$ and satisfies the properties above. By contractibility of the space of tame complex structures we can extend $J|_V'$ to a tame complex structure on the whole $\widehat{W}$. Choosing $c'>c$, and identifying $Z$ with $\{c'\}\times Z$, we see that \Cref{almostcomplex} implies $\widehat{W}\cap\{s\leq c'\}$ is a weak filling of $(Z,\,\ker\alpha)$.
\end{proof}

\subsection{Weak $\Rightarrow$ scattering}

This is the easier implication. It follows from the following well-known consequence of Moser's argument:
\begin{prop}\label{generalnormalform2}
Let $(W,\,\omega)$ be a symplectic manifold whose boundary $Z$ has a positive cooriented contact structure $\xi=\ker\alpha$. Assume that $\omega|_{\xi}$ is symplectic. Denoting $\omega_Z:=\omega|_{TZ}$, there is a collar neighbourhood $V\cong(-\varepsilon,\,0]\times Z$ such that $\omega\cong d(s\alpha)+\omega_Z$.
\end{prop}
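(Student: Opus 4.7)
The plan is a standard two-stage Moser argument: first arrange a collar in which $\omega$ agrees with the target $d(s\alpha)+\omega_Z$ pointwise on $TW|_Z$, and then interpolate the two symplectic forms away by a flow that fixes $Z$.

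The first step is to choose a section $X$ of $TW|_Z$, transverse to $Z$ and pointing into the interior, such that $\iota_X\omega|_{TZ}=\alpha$. The linear map $TW|_Z\to T^*Z$, $v\mapsto\iota_v\omega|_{TZ}$, has kernel $(TZ)^{\perp_\omega}$; this kernel is one-dimensional and contained in $TZ$, because $TZ$ is a codimension-one (hence coisotropic) subspace of the symplectic bundle $TW|_Z$. Thus the map is surjective and $\alpha$ admits a lift. The preimage of $\alpha$ is an affine line parallel to that kernel, and it lies entirely inside $TZ$ precisely when $\alpha$ belongs to the image of $TZ\to T^*Z$, $Y\mapsto\iota_Y\omega_Z$, which is the annihilator of $\ker\omega_Z$. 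The hypothesis that $\omega|_\xi$ is symplectic forces $\ker\omega_Z\cap\xi=0$, so $\alpha(R)\neq 0$ for a nonzero $R\in\ker\omega_Z$; hence the preimage meets the complement of $TZ$ and a transverse lift $X$ exists. The positivity of the cooriented contact structure selects the inward sign.

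Next, flow an extension of $X$ off $Z$ to produce a collar embedding $\phi:(-\varepsilon,0]\times Z\hookrightarrow W$, identity on $Z$, with $\phi_*\partial_s|_{s=0}=X$. Set $\omega_0:=d(s\alpha)+\omega_Z$ and $\omega_1:=\phi^*\omega$. Both forms are closed (that $d\omega_Z=0$ on $Z$ follows from $d\omega=0$ on $W$) and they agree as $ds\wedge\alpha+\omega_Z$ on the whole of $TW|_{s=0}$, since $d(s\alpha)|_{s=0}=ds\wedge\alpha$ and $\iota_X\omega|_{TZ}=\alpha$. Consequently $\omega_t:=(1-t)\omega_0+t\omega_1$ coincides with $\omega_0$ on $TW|_Z$, so it is symplectic on a possibly smaller collar, uniformly in $t\in[0,1]$. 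By the relative Poincar\'e lemma applied to the closed form $\omega_1-\omega_0$, which vanishes identically on $TW|_Z$, there is a $1$-form $\sigma$ on the collar with $d\sigma=\omega_1-\omega_0$ and $\sigma|_Z=0$. Solving $\iota_{X_t}\omega_t=-\sigma$ produces a time-dependent vector field vanishing on $Z$ whose flow $\psi_t$ is defined for $t\in[0,1]$ near $Z$, fixes $Z$ pointwise, and satisfies $\psi_1^*\omega_1=\omega_0$; the desired identification of the collar is then $\phi\circ\psi_1$.

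The non-routine step is the transverse lift of $\alpha$: this is precisely where both hypotheses enter, since without $\omega|_\xi$ being symplectic one cannot exclude $\ker\omega_Z\subset\xi$, which would obstruct transversality, and without the positivity condition one cannot orient the lift inward. Everything past that point is the standard Moser template.
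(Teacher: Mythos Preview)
The paper does not actually prove this proposition: it is stated as a ``well-known consequence of Moser's argument'' and left at that. Your proposal supplies precisely such a Moser argument in full detail, and the mathematical content is correct---in particular, the linear-algebra step showing that a transverse lift $X$ with $\iota_X\omega|_{TZ}=\alpha$ exists exactly because $\omega|_\xi$ is nondegenerate (so $\ker\omega_Z\cap\xi=0$) is the right observation, and the relative Poincar\'e/Moser step is standard.

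One small sign slip: in a collar $(-\varepsilon,0]\times Z$ with boundary at $s=0$, the vector $\partial_s$ is \emph{outward}-pointing, so requiring $\phi_*\partial_s|_{s=0}=X$ forces $X$ to be outward, not ``pointing into the interior'' as you wrote. This does not affect the argument---the positivity hypothesis is indeed what pins down the sign, as you say---but the direction should be flipped for consistency with the stated collar parametrization.
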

In particular we can assume that the symplectic form on a collar neighbourhood of the boundary in a weak filling can be written as $d(s\alpha)+\omega_Z$. As a consequence one has
\begin{prop}\label{weakscat}
If $(W,\,\omega)$ is a weak filling of $(Z,\,\ker\alpha)$, then there is a symplectic form $\tilde{\omega}$ such that $(W,\,\tilde{\omega})$ is a symplectic manifold with scattering boundary $(Z,\,\ker\alpha)$.
\end{prop}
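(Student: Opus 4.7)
The plan is to convert $(W,\omega)$ into a symplectic manifold with scattering boundary in three moves: put $\omega$ in collar normal form near $\partial W$, attach a cylindrical end pushing the boundary ``to infinity'', and then reparametrize the radial coordinate to compactify that end as a scattering-type boundary.

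First I would invoke \Cref{generalnormalform2} to assume that in a collar $(-\varepsilon,0]\times Z$ of $\partial W$ one has $\omega=d(s\alpha)+\omega_Z$, where $\omega_Z=\omega|_{TZ}$ is a closed $2$-form on $Z$. Next, extend $W$ by attaching the half-cylinder $[0,+\infty)\times Z$ along $\partial W=\{0\}\times Z$ and define the extended form $\widehat{\omega}$ to agree with $d(s\alpha)+\omega_Z$ on the end. The key point is that $\widehat{\omega}$ must be symplectic for all $s\geq 0$. Expanding,
\[
(d(s\alpha)+\omega_Z)^{n}=n\,ds\wedge\alpha\wedge(s\,d\alpha+\omega_Z)^{n-1},
\]
since $(ds\wedge\alpha)^{2}=0$ and $(s\,d\alpha+\omega_Z)^{n}=0$ for dimension reasons (both summands are pulled back from $Z$). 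Non-degeneracy at height $s$ therefore reduces to $s\,d\alpha|_{\xi}+\omega_Z|_{\xi}$ being symplectic on $\xi=\ker\alpha$. This is exactly the weak-filling definition: $\omega|_{\xi}+\eta$ is symplectic for every representative $\eta\in CS_{\xi}$, applied to $\eta=s\,d\alpha|_{\xi}$ for $s>0$; the $s=0$ case is covered by $\omega|_{\xi}$ being symplectic.

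Finally, I would reparametrize the end via $s=\tfrac{1}{2\lambda^{2}}$, so that $s\to+\infty$ corresponds to $\lambda\to 0^{+}$. Using $ds=-\lambda^{-3}d\lambda$, the cylindrical form becomes
\[
d(s\alpha)+\omega_Z=-\frac{d\lambda}{\lambda^{3}}\wedge\alpha+\frac{1}{2\lambda^{2}}d\alpha+\omega_Z,
\]
which is precisely the scattering normal form of \Cref{generalnormalform} with $\beta_{1}=0$ and $\beta_{2}=\omega_Z$ (and $\omega_Z$ is closed because $\omega$ is). Hence $\widehat{\omega}$ extends to a scattering symplectic form on the compactification obtained by adjoining the copy of $Z$ at $\lambda=0$; as a smooth manifold with boundary this compactification is diffeomorphic to $W$, producing the desired $\tilde{\omega}$.

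The only nontrivial step is the symplecticity check on the entire infinite cylinder. In the strong case ($\omega_Z=d\alpha$) this is immediate, but in the weak case it is exactly the condition ``$\omega|_{\xi}+\eta$ symplectic for every $\eta\in CS_{\xi}$'' that propagates non-degeneracy uniformly up to $s=+\infty$, with no cut-off function needed. Everything else is a change-of-variables calculation and a comparison with the normal form already established earlier in the paper.
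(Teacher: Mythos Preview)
Your proof is correct and follows essentially the same route as the paper: invoke \Cref{generalnormalform2} to get the collar form $d(s\alpha)+\omega_Z$, attach an infinite cylindrical end with the same expression, and then apply the change of variables $s=\tfrac{1}{2\lambda^{2}}$ to recognize the scattering normal form with $\beta_1=0$, $\beta_2=\omega_Z$. The paper's proof is terser and does not spell out the symplecticity check on the full end; your computation $(d(s\alpha)+\omega_Z)^n=n\,ds\wedge\alpha\wedge(s\,d\alpha+\omega_Z)^{n-1}$ and the identification of non-degeneracy for all $s\ge 0$ with the weak-filling condition is exactly the point where that hypothesis is used, so your write-up is in fact more complete on this step.
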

\begin{proof}
Assume that $\omega=d(s\alpha)+\omega_Z$ in a neighbourhood of the boundary. Using \Cref{generalnormalform2}, attach a copy of $(-\varepsilon,\,+\infty)\times Z$ along a collar neighbourhood of $\partial W$, with symplectic form $d(s\alpha)+\omega_Z$. Denote the resulting symplectic manifold $(\widehat{W},\,\widehat{\omega})$.
The inverse of the change of coordinates in \Cref{deform1} turns $(\widehat{W},\,\widehat{\omega})$ into the open part of a symplectic manifold with scattering boundary, with singular locus $(Z,\,\ker\alpha)$, with $\beta_1=0$, and $\beta_2=\omega_Z$.
\end{proof}
The main theorem follows as a consequence of \Cref{weakscat} and \Cref{boundaryscattering}.

\section{Implications for scattering symplectic manifolds}\label{4}

Consider $q:\tilde{Z}\longrightarrow Z$ the double cover map. If $Z$ is endowed with an overtwisted contact structure (in the sense of \cite{Borman2015}), then the pullback contact structure on $\tilde{Z}$ is also overtwisted. More generally, if $Z$ has a \textit{bordered Legendrian open book (bLob)}  (\cite{weakfillings}), also $\tilde{Z}$ does. Since a bLob is an obstruction to weak fillability, we obtain the following result.
\begin{thm}
A connected contact manifold containing a bLob cannot appear as the singular locus of a (not necessarily orientable) scattering symplectic manifold. In particular, the singular locus of a scattering symplectic manifold, if connected, is tight (i.e. not overtwisted).
\end{thm}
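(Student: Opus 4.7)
The plan is to argue by contradiction, leveraging the main theorem (\Cref{mainthm}) to reduce every case to an obstruction against weak fillability. Suppose $Z$ is connected, contains a bLob $L$, and nevertheless appears as the singular locus of some scattering symplectic manifold $(X,Z,\omega)$. Split into two cases depending on whether $X$ is orientable.

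If $X$ is orientable, then by the first bullet of \Cref{mainthm} the contact manifold $(Z,\ker\alpha)$ is weakly fillable. But by the main result of \cite{weakfillings}, the existence of a bLob is an obstruction to weak fillability, contradicting the hypothesis that $L\subset Z$ is a bLob. If $X$ is non-orientable, then by the second bullet of \Cref{mainthm} there is a connected double cover $p:\tilde{Z}\to Z$ such that $(\tilde{Z},\,p^*\ker\alpha)$ is weakly fillable. To conclude I would verify that $p^{-1}(L)$, or a connected component thereof, is still a bLob in $\tilde{Z}$: a bLob is defined purely by local differential-topological data (an embedded submanifold whose tangent bundle lies in the contact distribution along the interior and whose boundary is Legendrian with a suitable open book structure), and since $p$ is a local contactomorphism, every lift of $L$ to $\tilde{Z}$ is again a bLob. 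This again contradicts weak fillability of $\tilde{Z}$, completing the argument.

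For the second sentence, it suffices to observe that in the sense of \cite{Borman2015} an overtwisted contact manifold contains a \emph{plastikstufe}, which in particular is a bLob (a small neighborhood of any overtwisted disk/plastikstufe furnishes the required bLob), so overtwisted implies bLob-containing, and the first sentence applies. The one step that requires a little care, and which I expect to be the main (minor) obstacle, is the verification that bLob-ness is preserved under a covering map and the identification of overtwistedness with the bLob obstruction in the relevant dimensions; once these two functorial facts are in place the theorem is immediate from \Cref{mainthm}.
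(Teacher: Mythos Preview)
Your proposal is correct and follows essentially the same route as the paper: reduce via \Cref{mainthm} to weak fillability of $Z$ (orientable case) or of a connected double cover $\tilde{Z}$ (non-orientable case), observe that a bLob in $Z$ lifts to a bLob in $\tilde{Z}$ because the covering map is a local contactomorphism, and invoke the bLob obstruction to weak fillability from \cite{weakfillings}. The paper's own justification is just as terse as yours on the two points you flagged as requiring care (lifting of bLobs and the implication overtwisted $\Rightarrow$ bLob), so there is no gap relative to the paper.
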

Moreover, since every weak filling can be deformed to a stable Hamiltonian filling (\cite{weakfillings}), one obtains a well behaved theory of possibly punctured holomorphic curves with values in a scattering symplectic manifold, the punctures being asymptotic to closed Reeb orbits in the singular locus (with respect to the deformed structure).

\printbibliography
\end{document}